\providecommand{\U}[1]{\protect\rule{.1in}{.1in}}
\newtheorem{theorem}{Theorem}
\newtheorem{claim}[theorem]{Claim}
\newtheorem{corollary}[theorem]{Corollary}
\newtheorem{example}[theorem]{Example}
\newtheorem{lemma}[theorem]{Lemma}
\newtheorem{remark}[theorem]{Remark}
\newenvironment{proof}{\noindent{\em Proof:}}{$\Box$~\\}
\begin{document}

\title{A New Proof of Meissner's Optimal Bound on the Degree of a Poincar\'e Multiplier and an Improved Optimal Degree Multiplier}
\author{Brittany Riggs}
\maketitle

\begin{abstract}
Let $f$ be a monic univariate polynomial. We say
that $f$ is \emph{positive\/} if~$f(x)$ is positive over all $x > 0$. If all
the coefficients of $f$ are non-negative, then $f$ is trivially positive. In~
1883, Poincar\'e proved that$f$ is positive if and only if there exists a monic
polynomial $g$ such that all the coefficients of $gf$ are non-negative. Such
polynomial $g$ is called a \emph{Poincar\'e multiplier\/} for the positive
polynomial $f$. Of course one hopes to find a multiplier with smallest degree.
In 1911, Meissner provided such a bound for quadratic polynomials. In this paper, we provide a linear algebra proof of Meissner's optimal bound and compare an improved optimal degree Poincar\'e multiplier to one provided by Meissner.
\end{abstract}

\section{Introduction}

Let $f$ be a monic univariate polynomial. We say that $f$ is \emph{positive\/}
if~$f(x)$ is positive over all $x>0$.

If all the coefficients of $f$ are non-negative, then obviously $f$ is
positive, but the converse is not true. Consider the following small (toy)
examples:
\begin{align*}
f_{1}  &  =x^{4}+x^{3}+10x^{2}+2x+10\\
f_{2}  &  =x^{4}-x^{3}-10x^{2}-2x+10\\
f_{3}  &  =x^{4}-x^{3}+10x^{2}-2x+10
\end{align*}
\noindent Note that all the$\ $coefficients of $f_{1}$ are non-negative. Thus
it is trivial to see that $f_{1}$ is positive. However $f_{2}$ and $f_{3}$
have negative coefficients, and thus it is not obvious whether they are
positive or not. It turns out that $f_{2}$ is not positive and $f_{3}$ is positive.

In 1883, Poincar\'e \cite{P83} proposed and proved a modification of the
converse: if $f$ is positive, then there exists a monic polynomial $g$ such
that all the coefficients of $gf$ are non-negative. Such polynomial $g$ is
called a \emph{Poincar\'{e} multiplier} for the positive polynomial $f$. For
the above examples, we have:

\begin{itemize}
\item Since $f_{2}$ is not positive, there is no Poincar\'{e} multiplier for
$f_{2}$.

\item Since $f_{3}$ is positive, there is a Poincar\'{e} multiplier for
$f_{3}$. For instance, let $g=x+1$. Then
\[
gf_{3}=\left(  x+1\right)  \left(  x^{4}-x^{3}+10x^{2}-2x+10\right)
=x^{5}+9x^{3}+8x^{2}+8x+10
\]
Note that all the coefficients of $gf_{3}$ are non-negative.
\end{itemize}

In 1911, Meissner \cite{M11} (an English translation is provided in \cite{W23}
and an alternate explanation in \cite{C18}) provided a shape for this
multiplier for an arbitrary positive quadratic polynomial $f=x^{2}%
-2r\cos(\theta)x+r^{2}$ with non-real roots $re^{\pm i\theta}$ and gave an optimal degree for this multiplier. The following $g$ is a Poincar\'{e}
multiplier for $f$:%

\begin{equation}
\label{eq:Meissner}g_{M}=\sum_{i=0}^{s} \dfrac{r^{2}\sin(i+1)\theta}%
{\sin(\theta)}\left(  \dfrac{x}{r}\right)  ^{i}%
\end{equation}
where
\begin{align*}
s  &  =\left\lceil \dfrac{\pi}{\theta} \right\rceil -2.
\end{align*}

This bound on the degree of a Poincar\'{e} multiplier for a quadratic
polynomial can easily be extended to higher degrees. Naturally, we wonder if
the Meissner bound on the degree of the Poincar\'{e} multiplier is
optimal in any way for higher degree polynomials. The answer to this question
is left to a different paper.

In working to prove the optimality of Meissner's bound for arbitrary
polynomials, we developed a linear algebra approach to this subject that led to a new
proof of Meissner's optimal bound for quadratic polynomials and a new optimal degree Poincar\'{e}
multiplier.  While there is no unique multiplier of a given degree, we will demonstrate that the new multiplier is less than or equal to Meissner's using two different partial orderings.

In Section \ref{sec:LA} of this paper, we will develop the linear algebra
framework. We use it to prove Meissner's bound (Section \ref{sec:bound})
and a new multiplier (Section \ref{sec:new}). Finally, in Section
\ref{sec:compare}, we study Meissner's multiplier through the new linear
algebra lens and compare it to our Poincar\'{e} multiplier. This section
highlights the connections between this work and Meissner's as well as
demonstrates how we have improved upon Meissner's original multiplier.

\section{Main Results}

\begin{theorem}
[Meissner's Optimal Bound for Degree 2 \cite{M11}]\label{thm:optimal2} Let
$f=x^{2}+a_{1}x+a_{0}\in\mathbb{R}\left[  x\right]  $ be without real roots.
Then the smallest degree of non-zero $g\in\mathbb{R}\left[  x\right]  $ such
that $\operatorname*{coeffs}\left(  gf\right)  \geq0$ is
\[
s=\left\lceil \frac{\pi}{\theta}\right\rceil -2
\]
where $\alpha=re^{i\theta}$ is the non-real root of $f$ with positive
imaginary part.
\end{theorem}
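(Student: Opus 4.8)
The plan is to prove the two inequalities separately: (i) there is a nonzero $g$ of degree $s$ with $\operatorname{coeffs}(gf)\ge 0$, and (ii) every nonzero $g$ with $\operatorname{coeffs}(gf)\ge 0$ has degree at least $s$. Part (i) is essentially handed to us by Meissner: the polynomial $g_M$ of~\eqref{eq:Meissner} is a Poincar\'e multiplier for $f$, and its $x^s$-coefficient is a positive multiple of $\sin((s+1)\theta)$. Since $s+1=\lceil\pi/\theta\rceil-1$ and $\lceil\pi/\theta\rceil-1<\pi/\theta$ (the least integer $\ge\pi/\theta$ cannot have an integer below it that is still $\ge\pi/\theta$), we get $0<(s+1)\theta<\pi$, hence $\sin((s+1)\theta)>0$ and $\deg g_M=s$. (When $s=0$, i.e. $\theta\in[\pi/2,\pi)$, this is just the remark that $g=1$ works because then $a_1=-2r\cos\theta\ge 0$; and the lower bound (ii) is vacuous.) So the real content is the lower bound.

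For (ii), write $\alpha=re^{i\theta}$ with $r>0$ and $\theta\in(0,\pi)$, let $g\ne 0$ satisfy $\operatorname{coeffs}(gf)\ge 0$, and set $h=gf=\sum_{k=0}^{N}b_k x^k$ with $N=\deg g+2\ge 2$. Then $b_k\ge 0$ for all $k$; since $f$ is monic, $b_N$ equals the leading coefficient of $g$, which is nonzero, so $b_N>0$. Because $f(\alpha)=0$ we have $h(\alpha)=0$, and in particular $\operatorname{Im} h(\alpha)=\sum_{k=0}^{N}b_k r^k\sin(k\theta)=0$. The idea is that this one linear constraint on the coefficient vector of $gf$, namely the functional $v\mapsto\sum_k v_k r^k\sin(k\theta)$, is forced to vanish, while on the nonnegative orthant it is \emph{strictly} positive as long as all indices $k$ in play satisfy $k\theta<\pi$ — and that is exactly what a small $N$ guarantees.

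Concretely, suppose toward a contradiction that $\deg g\le s-1$, i.e. $N\le s+1=\lceil\pi/\theta\rceil-1$. Then for every $k$ with $1\le k\le N$ we have $0<k\theta\le(\lceil\pi/\theta\rceil-1)\theta<\pi$, again via $\lceil\pi/\theta\rceil-1<\pi/\theta$, so $\sin(k\theta)>0$. Hence each summand $b_k r^k\sin(k\theta)$ of $\operatorname{Im} h(\alpha)$ is $\ge 0$, while the $k=N$ summand $b_N r^N\sin(N\theta)$ is strictly positive. Thus $\operatorname{Im} h(\alpha)>0$, contradicting $h(\alpha)=0$. Therefore $\deg g\ge s$, and together with (i) the minimal degree is exactly $s$.

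The computations here are light: the only care needed is with the elementary ceiling inequalities tying $s$, $\theta$, and $\lceil\pi/\theta\rceil$ together, and with the routine observations that $h\ne 0$ and $b_N>0$. The conceptual heart — and the mild obstacle — is identifying the correct linear functional (evaluation at the root, imaginary part) and seeing that it is simultaneously annihilated on $\operatorname{coeffs}(gf)$ and positive on the nonnegative orthant through index $\lceil\pi/\theta\rceil-1$; this is what pins the threshold at $s$. I expect the paper's linear-algebra framework to repackage this observation more systematically (and to be what makes the higher-degree extension tractable), but for degree $2$ the bare-hands version above already suffices.
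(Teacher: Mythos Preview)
Your proof is correct. Both directions hold as you argue, and the only delicate point---that $(s+1)\theta<\pi$, equivalently $\lceil\pi/\theta\rceil-1<\pi/\theta$---is handled properly.

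Your route, however, differs from the paper's. For existence you invoke Meissner's multiplier $g_M$ directly, while the paper re-derives existence through its linear-algebra reduction: writing $\operatorname{coeffs}(gf)=c[T_s\mid I]$ and checking that $c=[0,\dots,0,1]$ makes $cT_s\ge 0$ via the closed forms $T_{s0}=r^{s+2}\sin(s+1)\theta/\sin\theta$ and $T_{s1}=-r^{s+1}\sin(s+2)\theta/\sin\theta$. For optimality, the paper stays in the same framework: when $t<s$ every entry of the second column of $T_t$ is strictly negative (since $\sin(k+2)\theta>0$ for $k\le t$), so no nonnegative $c$ can satisfy $cT_t\ge 0$. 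Your argument instead evaluates the imaginary part of $gf$ at the root $\alpha$ and observes that $\sum_k b_k r^k\sin(k\theta)$ is a nonnegative combination with a strictly positive term when $\deg(gf)\le s+1$, contradicting $gf(\alpha)=0$. The two arguments rest on the same sine inequalities, but yours is a bare-hands linear-functional argument (evaluation at the root), whereas the paper packages everything into the $T_s$ matrix. Your version is shorter and self-contained for degree~$2$; the paper's machinery is what yields the new determinantal multiplier $g_R$ and the subsequent comparison with $g_M$, and is meant to scale to higher degree.
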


\begin{theorem}
[Riggs Multiplier for Degree 2]\label{thm:certificate2}Let $f=x^{2}%
+a_{1}x+a_{0}\in\mathbb{R}\left[  x\right]  $ be without real roots. Then a
witness for $\operatorname*{coeffs}\left(  gf\right)  \geq0$ is given by%
\[
g_{R}=\left\vert
\begin{array}
[c]{ccccc}%
a_{2} &  &  &  & x^{0}\\
a_{1} & \ddots &  &  & \vdots\\
a_{0} &  & \ddots &  & \vdots\\
& \ddots &  & \ddots & \vdots\\
&  & a_{0} & a_{1} & x^{s}%
\end{array}
\right\vert
\]
where
\[
s=\left\lceil \frac{\pi}{\theta}\right\rceil -2
\]
and $\alpha=re^{i\theta}$ is the non-real root of $f$ with positive
imaginary part.
\end{theorem}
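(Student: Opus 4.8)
The plan is to evaluate the determinant $g_R$ in closed form, multiply by $f$, and show that the product collapses to a trinomial whose low-order coefficients are non-negative exactly when $s=\lceil\pi/\theta\rceil-2$; along the way this also re-establishes the upper bound of Theorem~\ref{thm:optimal2}.

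First I would expand $g_R$ along its last column. Writing $a_2=1$ (as $f$ is monic), this gives
\[
g_R=\sum_{i=0}^{s}(-1)^{s-i}M_i\,x^i,
\]
where $M_i$ is the $s\times s$ minor of the banded $(s+1)\times s$ block with the row carrying $x^i$ deleted. The key structural observation is that $M_i$ is block lower-triangular: the rows lying above the deleted one meet only the first $i$ columns, where they form a lower-triangular block with $a_2=1$ on the diagonal (determinant $1$), while the rows below form, after the obvious relabelling, the $(s-i)\times(s-i)$ tridiagonal matrix with $a_1$ on the diagonal, $a_0$ on the subdiagonal and $a_2=1$ on the superdiagonal. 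Hence $M_i=T_{s-i}$, where $T_m$ is the tridiagonal determinant satisfying $T_m=a_1T_{m-1}-a_0T_{m-2}$ with $T_0=1$, $T_1=a_1$. (This cofactor computation is essentially the content of the linear-algebra framework announced for Section~\ref{sec:LA}, so in the final version I would state it there as a lemma and merely invoke it here.)

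Next, with the non-real root written as $\alpha=re^{i\theta}$ one has $a_1=-2r\cos\theta$ and $a_0=r^2$; substituting $T_m=(-r)^mQ_m$ converts the recurrence into the Chebyshev recurrence $Q_m=2\cos\theta\,Q_{m-1}-Q_{m-2}$ with $Q_0=1$, $Q_1=2\cos\theta$, so $Q_m=\sin((m+1)\theta)/\sin\theta$ and
\[
g_R=\sum_{i=0}^{s}r^{\,s-i}\,\frac{\sin((s-i+1)\theta)}{\sin\theta}\,x^i .
\]
In particular $g_R$ is monic of degree $s$, hence non-zero. Multiplying by $f=x^2-2r\cos\theta\,x+r^2$ and repeatedly using $\sin((m+2)\theta)+\sin(m\theta)=2\cos\theta\,\sin((m+1)\theta)$, the coefficients of $x^2,\dots,x^{s+1}$ in $g_Rf$ all cancel, and the same identity evaluates the coefficient of $x$; only three terms survive:
\[
g_Rf=x^{s+2}-\frac{r^{s+1}\sin((s+2)\theta)}{\sin\theta}\,x+\frac{r^{s+2}\sin((s+1)\theta)}{\sin\theta}.
\]

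Finally, the hypothesis $s=\lceil\pi/\theta\rceil-2$ is equivalent to $s+2=\lceil\pi/\theta\rceil$, which forces $s\ge0$ (since $\theta\in(0,\pi)$), $s+1<\pi/\theta$, and $s+2\ge\pi/\theta$; hence $0<(s+1)\theta<\pi$ and $\pi\le(s+2)\theta<\pi+\theta<2\pi$. As $\sin\theta>0$, we get $\sin((s+1)\theta)>0$ and $\sin((s+2)\theta)\le0$, so all three coefficients of $g_Rf$ are non-negative, as required. The main obstacle is the first step: correctly identifying the cofactors $M_i$ and, above all, tracking the signs carefully enough that the cancellations leaving the trinomial are exact. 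Once the closed form for $g_R$ is in hand, everything else is a short trigonometric computation together with the elementary estimate on $(s+1)\theta$ and $(s+2)\theta$.
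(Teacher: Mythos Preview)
Your proof is correct and takes a more direct route than the paper. The paper first sets up the linear-algebra reduction $\operatorname{coeffs}(gf)=c[T_s\mid I]$ with $c=bR_s$, shows in Lemma~\ref{lem:d2} that the choice $c=[0,\dots,0,1]$ works, and then \emph{derives} the determinantal formula for $g_R$ by applying Cramer's rule to $b=cR_s^{-1}$. You instead start from the determinant, expand along the last column, identify the cofactors as tridiagonal determinants obeying a Chebyshev-type recurrence, and thereby obtain the closed form for the coefficients of $g_R$ (the content of Theorem~\ref{thm:coeffsHRM2}, which the paper proves separately) as a by-product; the explicit multiplication then shows that $g_Rf$ collapses to a trinomial---equivalent to the paper's $\operatorname{coeffs}(g_Rf)=[T_{s0},T_{s1},0,\dots,0,1]$---and both proofs finish with the same sign analysis of $\sin((s+1)\theta)$ and $\sin((s+2)\theta)$. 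Your approach is self-contained and yields the coefficient formula and the trinomial shape of $g_Rf$ in one pass; the paper's $c$-vector framework pays off elsewhere, in the optimality argument for Theorem~\ref{thm:optimal2} and in the comparison with Meissner's multiplier in Section~\ref{sec:compare}.
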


\begin{remark}
The linear algebra approach naturally leads to the certificate provided in
Theorem \ref{thm:certificate2}. This certificate for the multiplier is unique
from the one provided originally by Meissner in Equation \ref{eq:Meissner}.
\end{remark}

\begin{remark}
The coefficients of Meissner's multiplier are given in terms of the radius and
angle of the roots of the quadratic. We are naturally interested in
finding similar expressions for the coefficients of $g_{R}$.
\end{remark}

\begin{theorem}
[Coefficients of Riggs Multiplier in terms of the Root]\label{thm:coeffsHRM2}
The coefficients of $g_{R}$ are given by
\begin{align*}
b_{s-j}  &  =r^{j} \cdot\dfrac{\sin(j+1)\theta}{\sin(\theta)}\ \ \ \ \text{for }j=0,\hdots,s
\end{align*}
where $re^{\pm i\theta}$ are the non-real roots of $f$.
\end{theorem}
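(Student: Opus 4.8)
The plan is to read off the coefficients of $g_R$ by expanding the displayed determinant along its last column. Writing $g_R=\sum_{j=0}^{s}b_{s-j}x^{s-j}$, the coefficient of $x^{s-j}$ is a cofactor: deleting the last column together with the row carrying $x^{s-j}$ (which is row $s-j+1$) from the $(s+1)\times(s+1)$ coefficient matrix leaves a minor $N_j$, and tracking the sign of the cofactor gives $b_{s-j}=(-1)^{j}N_j$ (here I use $a_2=1$, the leading coefficient of $f$).

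The next step is to compute $N_j$ via the block structure of the coefficient band. Since row $i$ of that band has its nonzero entries only in columns $\{i-2,i-1,i\}$, the first $s-j$ rows of the minor are supported entirely on the first $s-j$ columns, where they form a lower-triangular matrix with $a_2=1$ down the diagonal. Hence the minor is block lower-triangular, and $N_j$ equals the determinant of the remaining $j\times j$ block, which occupies rows $s-j+2,\dots,s+1$ and columns $s-j+1,\dots,s$. A change of index identifies this block as the tridiagonal matrix with $a_1$ on the diagonal, $a_2$ on the superdiagonal and $a_0$ on the subdiagonal; denoting its determinant $D_j$, we get $b_{s-j}=(-1)^{j}D_j$, with $D_0=1$ and $D_1=a_1$.

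Finally I would solve for $D_j$. A tridiagonal determinant of this shape satisfies $D_j=a_1D_{j-1}-a_2a_0D_{j-2}=a_1D_{j-1}-a_0D_{j-2}$. Reading the elementary symmetric functions off the roots $re^{\pm i\theta}$ gives $a_1=-2r\cos\theta$ and $a_0=r^{2}$, so the characteristic equation $\lambda^{2}+2r\cos\theta\,\lambda+r^{2}=0$ has the two roots $-re^{\pm i\theta}$; they are distinct because $f$ has no real root, so $\theta\in(0,\pi)$ and $\sin\theta\neq0$. Matching the initial values $D_0=1$, $D_1=a_1$ yields $D_j=(-r)^{j}\,\sin((j+1)\theta)/\sin\theta$, and therefore $b_{s-j}=(-1)^{j}D_j=r^{j}\,\sin((j+1)\theta)/\sin\theta$, which is the assertion.

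I expect the only delicate part to be the bookkeeping: fixing the cofactor sign, verifying that the top $s-j$ rows genuinely decouple so that the minor factors as a product of two determinants, and reindexing the lower block correctly as the matrix whose determinant is $D_j$; once those are in place the rest is the routine solution of a constant-coefficient linear recurrence. As an alternative to the explicit block computation, one can instead argue directly from the determinant that the $b_m$ obey the recurrence $a_0b_{m+2}+a_1b_{m+1}+b_m=0$ associated with $f$, pin down the two top coefficients $b_s=1$ and $b_{s-1}=-a_1$ by small minors, and then solve as above; I would present whichever version is cleaner.
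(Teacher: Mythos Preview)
Your proposal is correct, and it takes a genuinely different route from the paper. The paper does not work with the determinant formula for $g_R$ at all; instead it returns to the linear-algebra identity $c=bR_s$ with $c=[0,\ldots,0,1]$, reads off the system $b_s=1$, $b_{s-1}+a_1b_s=0$, and $b_{s-j}+a_1b_{s-j+1}+a_0b_{s-j+2}=0$ for $j\ge 2$, and then verifies the closed form $b_{s-j}=r^{j}\sin((j+1)\theta)/\sin\theta$ by a direct induction using the identity $2\cos\theta\sin((j+1)\theta)-\sin(j\theta)=\sin((j+2)\theta)$. Your primary argument, by contrast, stays with the determinant: cofactor expansion along the last column, block-triangular factorization of each minor, identification of the surviving $j\times j$ block as the tridiagonal determinant $D_j$, and solution of $D_j=a_1D_{j-1}-a_0D_{j-2}$ via its characteristic roots $-re^{\pm i\theta}$. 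Your bookkeeping (the sign $(-1)^j$, the decoupling of the top $s-j$ rows, and the reindexing of the lower block) is right. The two approaches meet at the same second-order recurrence; the paper's version is shorter because it leverages the framework already built in Section~3, while your version is self-contained and makes transparent why the determinant definition of $g_R$ encodes exactly the Chebyshev-like sequence. Your stated alternative---derive the recurrence for the $b_m$ directly, fix $b_s=1$ and $b_{s-1}=-a_1$ from small minors, and solve---is essentially the paper's proof recast without mentioning $c=bR_s$.
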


\begin{remark}
\label{rem:coeff} The coefficients of $g_{R}$ can also be expressed as
follows:
\begin{align*}
b_{i}  &  =r^{s-i} \cdot\dfrac{\sin(s-i+1)\theta}{\sin(\theta)}\ \ \ \ \text{for }i=0,\hdots,s
\end{align*}
We have chosen the form in Theorem \ref{thm:coeffsHRM2} as the proof flows
more naturally.
\end{remark}

\begin{example}
Let $f=x^{2}-2x+2$, which has non-real roots $\sqrt{2}e^{\pm\frac{\pi}{4}i}$.
Note
\[
s=\left\lceil \frac{\pi}{\frac{\pi}{4}}\right\rceil -2=2.
\]
The Meissner multiplier, $g_{M}$, is given by
\begin{align*}
g_{M}  &  =\sum_{i=0}^{2} \dfrac{\sqrt{2}^{2} \sin\left( (i+1)\dfrac{\pi}%
{4}\right) }{\sin\left( \dfrac{\pi}{4}\right) }\left( \dfrac{x}{\sqrt{2}%
}\right) ^{i}\\
&  = \dfrac{\sin\left( \dfrac{3\pi}{4}\right) }{\sin\left( \dfrac{\pi}%
{4}\right) }x^{2}+\dfrac{\sqrt{2}\sin\left( \dfrac{2\pi}{4}\right) }%
{\sin\left( \dfrac{\pi}{4}\right) }x+\dfrac{\sqrt{2}^{2}\sin\left( \dfrac{\pi
}{4}\right) }{\sin\left( \dfrac{\pi}{4}\right) }\\
&  =x^{2}+2x+2
\end{align*}
Note that
\[
g_{M}\, f=\left(  x^{2}+2x+2\right)  \left(  x^{2}-2x+2\right)  =x^{4}+4.
\]
The new multiplier, $g_{R}$, is given by
\begin{align*}
g_{R}  &  =\left|
\begin{array}
[c]{rrr}%
1 & 0 & x^{0}\\
-2 & 1 & x^{1}\\
2 & -2 & x^{2}%
\end{array}
\right| \\
&  =x^{2}+2x+2
\end{align*}
Note, in this case, that $g_{M}=g_{R}$.
\end{example}

\begin{example}
\label{ex:notequal} Let $f=x^{2}-2\cos\left( \dfrac{2\pi}{7}\right) x+1$,
which has non-real roots $1e^{\pm\frac{2\pi}{7}i}$. Note
\[
s=\left\lceil \frac{\pi}{\frac{2\pi}{7}}\right\rceil -2=2.
\]
The Meissner multiplier, $g_{M}$, is given by
\begin{align*}
g_{M}  &  =\sum_{i=0}^{2} \dfrac{1^{2} \sin\left( (i+1)\dfrac{2\pi}{7}\right)
}{\sin\left( \dfrac{2\pi}{7}\right) }\left( \dfrac{x}{1}\right) ^{i}\\
&  = \dfrac{\sin\left( \dfrac{6\pi}{7}\right) }{\sin\left( \dfrac{2\pi}%
{7}\right) }x^{2}+\dfrac{\sin\left( \dfrac{4\pi}{7}\right) }{\sin\left(
\dfrac{2\pi}{7}\right) }x+\dfrac{\sin\left( \dfrac{2\pi}{7}\right) }%
{\sin\left( \dfrac{2\pi}{7}\right) }\\
&  \approx0.555x^{2}+1.247x+1
\end{align*}
Note that
\begin{align*}
g_{M}\, f  &  =\left( 0.555x^{2}+1.247x+1\right) \left( x^{2}-2\cos\left(
\dfrac{2\pi}{7}\right) x+1\right) \\
&  \approx0.555x^{4}+0.555x^{3}+1.
\end{align*}
The new multiplier, $g_{R}$, is given by
\begin{align*}
g_{R}  &  =\left|
\begin{array}
[c]{ccc}%
1 & 0 & x^{0}\\
-2\cos\left( \dfrac{2\pi}{7}\right)  & 1 & x^{1}\\
1 & -2\cos\left( \dfrac{2\pi}{7}\right)  & x^{2}%
\end{array}
\right| \\
&  =x^{2}+2\cos\left( \dfrac{2\pi}{7}\right) x+\left( 4\cos^{2}\left(
\dfrac{2\pi}{7}\right) -1\right) \\
&  \approx x^{2}+1.247x+0.555
\end{align*}
Note that
\begin{align*}
g_{HR}\, f  &  =\left(  x^{2}+1.247x+0.555\right)  \left( x^{2}-2\cos\left(
\dfrac{2\pi}{7}\right) x+1\right) \\
&  \approx x^{4}+0.555x+0.555.
\end{align*}
Note, in this case, that $g_{M} \neq g_{R}$.
\end{example}

\section{Reframing with Linear Algebra}

\label{sec:LA}

Meissner's proof of the multiplier in Equation \ref{eq:Meissner} and the bound in Theorem \ref{thm:optimal2} relied on a geometric interpretation of the problem that is not fleshed out in all its detail.  Here, we use linear algebra to provide a fully explicit proof. 

Let $f\in\mathbb{R}\left[  x\right]  $ be monic without losing generality.
Assume that $f$ does not have positive real root. The problem is to find
non-zero $g\in\mathbb{R}\left[  x\right]  $ such that $\operatorname*{coeffs}%
\left(  gf\right)  \geq0$. Let%
\begin{align*}
f &  =a_2x^{2}+a_{1}x^{1}+a_{0}x^{0}\\
g &  =b_{s}x^{s}+\cdots+b_{0}x^{0}%
\end{align*}
where $a_{2}=1$ and $b_{s}=1$ (we can also assume $g$ is monic without losing generality). In the steps below, we will study the coefficients of $gf$ for arbitrary $s$, but demonstrate the matrices using $s=3$ to provide clarity.  These steps extend directly to higher $s$-values.

Note
\begin{align*}
\operatorname*{coeffs}\left(  gf\right)    & =\underset{b\in\mathbb{R}%
^{1\times\left(  s+1\right)  }}{\underbrace{\left[
\begin{array}
[c]{cccc}%
b_{0} & b_{1} & b_{2} & 1
\end{array}
\right]  }}\underset{A_{s}\in\mathbb{R}^{\left(  s+1\right)  \times\left(
s+3\right)  }}{\underbrace{\left[
\begin{array}
[c]{cccccc}%
a_{0} & a_{1} & 1 &  &  & \\
& a_{0} & a_{1} & 1 &  & \\
&  & a_{0} & a_{1} & 1 & \\
&  &  & a_{0} & a_{1} & 1
\end{array}
\right]  }}\end{align*}
This is easy to verify, e.g. the coefficient in $gf$ of $x^0$ is $b_0a_0$, the coefficient of $x^1$ is $b_0a_1+b_1a_0$, etc.

It proves useful to consider the first two columns of $A_s$ independently from the remaining $(s+1)$ columns.  Below, we adjoin the two sub-matrices, give a new label to each, then manipulate the expression in an effort to reduce our search space to fewer columns.

\begin{align*} \operatorname*{coeffs}\left(  gf\right) & =b\left[  \underset{L_{s}\in\mathbb{R}^{\left(  s+1\right)  \times
2}}{\underbrace{\left[
\begin{array}
[c]{cc}%
a_{0} & a_{1}\\
& a_{0}\\
& \\
&
\end{array}
\right]  }}\Bigg|\underset{R_{s}\in\mathbb{R}^{\left(  s+1\right)  \times\left(
s+1\right)  }}{\underbrace{\left[
\begin{array}
[c]{cccc}%
1 &  &  & \\
a_{1} & 1 &  & \\
a_{0} & a_{1} & 1 & \\
& a_{0} & a_{1} & 1
\end{array}
\right]  }}\right]  \\
& =bR_{s}R_{s}^{-1}\left[  L_{s}|R_{s}\right] \ \ \ \ \text{(we multiply by } I=R_sR_s^{-1}\text{)} \\
& =bR_{s}\left[  R_{s}^{-1}L_{s}|R_{s}^{-1}R_{s}\right] \ \ \ \ \text{(we distribute } R_s^{-1} \text{ into the adjoined matrix)}  \\
& =\underset{c\in\mathbb{R}^{1\times\left(  s+1\right)  }}{\underbrace{bR_{s}%
}}\left[  \underset{T_{s}\in\mathbb{R}^{\left(  s+1\right)  \times
2}}{\underbrace{R_{s}^{-1}L_{s}}}|I_{s+1}\right]  \\
& =\left[  cT_{s}\ |\ c\right]
\end{align*}
Thus%
\begin{align*}
& \underset{g\neq0,\ \deg\left(  g\right)  \leq s}{\exists}%
\ \operatorname*{coeffs}\left(  gf\right)  \geq0\\
\Longleftrightarrow \ & \underset{b\neq0}{\exists}\ \left[  cT_{s}\ |\ c\right]
\geq0\\
\Longleftrightarrow\ & \underset{c\neq0}{\exists}\ \left[  cT_{s}\ |\ c\right]
\geq0\\
\Longleftrightarrow \ & \underset{c\neq0}{\exists}\ cT_{s}\geq0\text{ and }c\geq0
\end{align*}

Where before we were studying a $1 \times (s+1)$ vector and an $(s+1) \times (s+3)$ matrix, now we are interested in a non-negative $1 \times (s+1)$ vector and an $(s+1) \times 2$ matrix.  We will study $T_s$.

\[\begin{array}{crl}
 & T_{s}  & =R_s^{-1}L_s \\
\iff & R_sT_s & =L_s \\
\iff &
\left[
\begin{array}
[c]{cccc}%
1 &  &  & \\
a_{1} & 1 &  & \\
a_{0} & a_{1} & 1 & \\
& a_{0} & a_{1} & 1
\end{array}
\right]\left[
\begin{array}
[c]{cc}%
T_{00} & T_{01}\\
T_{10} & T_{11}\\
T_{20} & T_{21} \\
T_{30} & T_{31}
\end{array}
\right] & = \left[
\begin{array}
[c]{cc}%
a_{0} & a_{1}\\
& a_{0}\\
& \\
&
\end{array}
\right]
\end{array}\]
Let's examine the entries $T_{k0}$.  Note
\[\begin{array}{crl} & T_{00} & =a_0=r^2 \\
& a_1T_{00}+T_{10} & =0 \\
\iff & T_{10} & =-a_1T_{00}=2r^3\cos(\theta) \\
& a_0T_{00}+a_1T_{10}+T_{20} & =0 \\
\iff & T_{20} & =-a_0T_{00}-a_1T_{10} \\
& & =r^4\left(-1+4\cos^2(\theta)\right) \\
& & =r^4\left(-1+4(1-\sin^2(\theta)\right) \\
& & =r^2\left(3-4\sin^2(\theta)\right) \end{array}\]
Note that we can rewrite these entries as follows, using multiple angle identities (see Section 6.5.10 of \cite{Z18}):
\begin{align*} T_{00} & =r^2 \cdot \dfrac{\sin(\theta)}{\sin(\theta)} \\
T_{10} & =r^3 \cdot \dfrac{\sin(2\theta)}{\sin(\theta)} \\
T_{20} & =r^4 \cdot \dfrac{\sin(3\theta)}{\sin(\theta)} \end{align*}

\begin{claim}\label{cl:k0} We have
$$T_{k0}=r^{k+2} \cdot  \dfrac{\sin(k+1)\theta}{\sin(\theta)}.$$
\end{claim}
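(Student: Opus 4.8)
\noindent The plan is to prove the claim by induction on $k$, using the three-term recurrence that the banded structure of $R_s$ forces on the first column of $T_s$. First I would extract that recurrence from $R_sT_s=L_s$. Since $R_s$ is lower triangular with $1$ on the diagonal, $a_1$ on the first subdiagonal and $a_0$ on the second subdiagonal, and since the first column of $L_s$ is $\left(a_0,0,0,\dots\right)^{\top}$, comparing the $k$-th entries of both sides yields $T_{00}=a_0$, then $a_1T_{00}+T_{10}=0$, and for every $k\geq 2$
\[ a_0T_{k-2,0}+a_1T_{k-1,0}+T_{k0}=0, \qquad\text{i.e.}\qquad T_{k0}=-a_1T_{k-1,0}-a_0T_{k-2,0}. \]
With the convention $T_{-1,0}=0$ this recurrence also covers the case $k=1$, so it holds for all $k\geq 1$.

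Next I would record the dictionary between the coefficients of $f$ and its root: since $re^{\pm i\theta}$ are the roots of $x^2+a_1x+a_0$, we have $a_0=r^2$ and $a_1=-2r\cos\theta$ (and $\sin\theta\neq 0$ because the roots are non-real). The base cases are then immediate: $T_{00}=r^2=r^2\dfrac{\sin\theta}{\sin\theta}$, and $T_{10}=-a_1T_{00}=2r\cos\theta\cdot r^2=r^3\dfrac{\sin 2\theta}{\sin\theta}$ using $\sin 2\theta=2\sin\theta\cos\theta$.

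For the inductive step, assuming the formula for $k-1$ and $k-2$, substitute into the recurrence to get
\[ T_{k0}=2r\cos\theta\cdot r^{k+1}\frac{\sin k\theta}{\sin\theta}-r^2\cdot r^{k}\frac{\sin(k-1)\theta}{\sin\theta}=\frac{r^{k+2}}{\sin\theta}\Bigl(2\cos\theta\,\sin k\theta-\sin(k-1)\theta\Bigr). \]
The product-to-sum identity $2\cos\theta\,\sin k\theta=\sin(k+1)\theta+\sin(k-1)\theta$ collapses the parenthesized term to $\sin(k+1)\theta$, giving $T_{k0}=r^{k+2}\dfrac{\sin(k+1)\theta}{\sin\theta}$, which is exactly the claimed formula.

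I do not anticipate a genuine obstacle here; the only steps needing care are reading the three-term recurrence cleanly off the band structure of $R_s$ (in particular pinning down the two base cases and the $T_{-1,0}=0$ convention) and invoking the correct trigonometric identity from \cite{Z18}. Since an essentially identical recurrence governs the second column entries $T_{k1}$, it is worth phrasing the argument so that it can be reused there as well.
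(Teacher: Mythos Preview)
Your proposal is correct and follows essentially the same route as the paper: it extracts the three-term recurrence from $R_sT_s=L_s$, verifies the two base cases $T_{00}$ and $T_{10}$, and closes the induction with the identity $2\cos\theta\sin k\theta=\sin(k+1)\theta+\sin(k-1)\theta$ (the ``multiple angle formula'' of \cite{Z18}). The paper in fact merges this claim with Claim~\ref{cl:k1} into Lemma~\ref{lem:Ast2} and proves both columns simultaneously, exactly as you suggest in your final remark.
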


Now we'll examine the entries $T_{k1}$, using the same identities as above. 
\[\begin{array}{crl} & T_{01} & =a_1=-2r\cos(\theta)=-r \cdot \dfrac{\sin(2\theta)}{\sin(\theta)} \\
& a_1T_{01}+T_{11} & =a_0 \\
\iff & T_{11} & =a_0-a_1T_{01}=r^2-4r^2\cos^2(\theta)=-r^2 \cdot \dfrac{\sin(3\theta)}{\sin(\theta)} \end{array}\]

\begin{claim}\label{cl:k1} We have
$$T_{k1}=-r^{k+1} \cdot \dfrac{\sin(k+2)\theta}{\sin(\theta)}.$$
\end{claim}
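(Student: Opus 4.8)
The plan is to prove Claim~\ref{cl:k1} by induction on $k$, in parallel with the computation of the entries $T_{k0}$. Equating second columns in the identity $R_sT_s=L_s$ and reading off successive rows gives
\[
T_{01}=a_1,\qquad T_{11}=a_0-a_1T_{01},\qquad T_{k1}=-a_1T_{k-1,1}-a_0T_{k-2,1}\ \ (k\ge 2),
\]
so the entries $T_{k1}$ satisfy the \emph{same} second-order linear recurrence as the entries $T_{k0}$; only the starting data differ. Hence the proof reduces to verifying two base cases and then matching this recurrence against the proposed closed form.

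For the base cases, $k=0$ and $k=1$ are exactly the two computations displayed immediately before the claim: $T_{01}=a_1=-r\,\sin(2\theta)/\sin\theta$ and $T_{11}=-r^2\,\sin(3\theta)/\sin\theta$, which are the $k=0$ and $k=1$ instances of the asserted formula. For the inductive step I would fix $k\ge 2$, assume the formula for $T_{k-1,1}$ and $T_{k-2,1}$, and substitute $a_1=-2r\cos\theta$ and $a_0=r^2$ into the recurrence. Pulling $-r^{k+1}/\sin\theta$ out of the result leaves $2\cos\theta\,\sin(k+1)\theta-\sin k\theta$, and the product-to-sum identity $2\cos\theta\,\sin(k+1)\theta=\sin(k+2)\theta+\sin k\theta$ (the same multiple-angle identity from Section~6.5.10 of \cite{Z18} used for Claim~\ref{cl:k0}) collapses this to $\sin(k+2)\theta$, giving $T_{k1}=-r^{k+1}\sin(k+2)\theta/\sin\theta$, as wanted.

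I do not anticipate a genuine obstacle; the care points are all bookkeeping. Because the recurrence is second order, two base cases are really needed, and the $k=1$ equation is exceptional in that its right-hand side is $a_0$ rather than $0$ — this is why $T_{11}$ must be checked directly rather than obtained from the generic recurrence, and it is the one place where column~$1$ behaves differently from column~$0$. One should also note that $\sin\theta\neq 0$, since $f$ has no real root, so that division by $\sin\theta$ is legitimate.

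Finally, it is worth recording a shortcut that avoids a second induction. Since $R_s$ is lower-triangular Toeplitz, so is $R_s^{-1}$, and therefore its second column is the downward shift of its first. Because the second column of $L_s$ is $a_1e_0+a_0e_1$ (with $e_0,e_1$ the standard basis vectors), this gives $T_{k1}=\tfrac{a_1}{a_0}T_{k0}+T_{k-1,0}$ for $k\ge 1$; substituting the formula of Claim~\ref{cl:k0} and applying the same product-to-sum identity then yields the result at once. This route is more conceptual, but it requires isolating and justifying the Toeplitz-inverse fact, so the direct induction above is likely the cleaner write-up.
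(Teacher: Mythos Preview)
Your proposal is correct and follows essentially the same route as the paper: the paper proves Claim~\ref{cl:k1} (together with Claim~\ref{cl:k0}) inside Lemma~\ref{lem:Ast2} by induction on $k$, with base cases $k=0,1$ computed directly and the induction step handled via the recurrence $T_{k+1,1}=-a_0T_{k-1,1}-a_1T_{k1}$ and the same multiple-angle identity. Your write-up is in fact slightly more careful than the paper's in flagging that the second-order recurrence genuinely requires two base cases, and your Toeplitz-shift shortcut is a nice alternative the paper does not mention.
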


\section{Proof of Meissner's Optimal Bound for Degree 2 (Theorem
\ref{thm:optimal2})}\label{sec:bound}

First, we will combine Claims \ref{cl:k0} and \ref{cl:k1} into Lemma \ref{lem:Ast2} and prove it.  Then we will prove there exists a multiplier with degree equal to the bound given in Theorem \ref{thm:optimal2}.  Finally, we will prove Theorem \ref{thm:optimal2}, that this bound is indeed optimal.

\begin{lemma}
\label{lem:Ast2}Let $f\in\mathbb{R}[x]$ be such that $\deg(f)=2$ without real
roots. Let the roots be $re^{\pm i\theta}$. Then we have%
$$T_{k0}=+r^{k+2}\dfrac{\sin\left(  k+1\right)  \theta}{\sin\theta} \ \ \ \ \text{and}\ \ \ \ T_{k1}=-r^{k+1}\dfrac{\sin(k+2)\theta}{\sin\theta}.$$

\end{lemma}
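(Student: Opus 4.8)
The plan is to extract two three-term recurrences from the matrix identity $R_s T_s = L_s$, notice that \emph{both} columns of $T_s$ are governed by the same recurrence (the Chebyshev recurrence, once the coefficients are rewritten in terms of $r$ and $\theta$), and then close the argument by a two-step induction. Concretely: $R_s$ is lower-triangular with $1$ on the diagonal, $a_1$ on the first subdiagonal and $a_0$ on the second subdiagonal, while $L_s$ has first column $(a_0,0,0,\dots)^{T}$ and second column $(a_1,a_0,0,\dots)^{T}$. Comparing entries of $R_s T_s$ and $L_s$ column by column therefore yields
\[
T_{00}=a_0,\qquad T_{10}=-a_1 T_{00},\qquad T_{k0}=-a_1 T_{k-1,0}-a_0 T_{k-2,0}\ \ (k\geq 2),
\]
\[
T_{01}=a_1,\qquad T_{11}=a_0-a_1 T_{01},\qquad T_{k1}=-a_1 T_{k-1,1}-a_0 T_{k-2,1}\ \ (k\geq 2).
\]
In particular the sequences $(T_{k0})_{k\geq0}$ and $(T_{k1})_{k\geq0}$ both satisfy $u_k=-a_1u_{k-1}-a_0u_{k-2}$ for $k\geq 2$; they differ only in their first two entries.

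Next I would translate the coefficients into root data. Since the roots of $f$ are $re^{\pm i\theta}$ with $\theta\in(0,\pi)$ (so $\sin\theta\neq0$), Vieta gives $a_0=r^2$ and $a_1=-2r\cos\theta$, and the common recurrence becomes $u_k=2r\cos\theta\,u_{k-1}-r^2u_{k-2}$. The decisive ingredient is the product-to-sum identity $\sin(k+1)\theta+\sin(k-1)\theta=2\cos\theta\sin k\theta$: substituting $U_k:=r^{k+2}\dfrac{\sin(k+1)\theta}{\sin\theta}$ and $V_k:=-r^{k+1}\dfrac{\sin(k+2)\theta}{\sin\theta}$ into $2r\cos\theta\,u_{k-1}-r^2u_{k-2}$ and cancelling the common power of $r$ together with the factor $1/\sin\theta$ reduces, in each case, to exactly this identity (with index $k$ for $U_k$ and index $k+1$ for $V_k$). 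Hence $U_k$ and $V_k$ each satisfy the recurrence for $k\geq 2$.

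It then remains to check two initial values per column. For the first column, $U_0=r^2=a_0=T_{00}$ and $U_1=r^3\dfrac{\sin2\theta}{\sin\theta}=2r^3\cos\theta=-a_1T_{00}=T_{10}$, so $U_k=T_{k0}$ for every $k$ by the shared recurrence; this is Claim \ref{cl:k0}. For the second column, $V_0=-r\dfrac{\sin2\theta}{\sin\theta}=-2r\cos\theta=a_1=T_{01}$, and using $\sin3\theta=\sin\theta\,(3-4\sin^2\theta)$ we get $V_1=-r^2(3-4\sin^2\theta)=r^2-4r^2\cos^2\theta=a_0-a_1T_{01}=T_{11}$, so $V_k=T_{k1}$ for every $k$; this is Claim \ref{cl:k1}. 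Together these are exactly the two formulas of Lemma \ref{lem:Ast2}.

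I do not expect a deep difficulty here; the two points that need care are bookkeeping. First, the homogeneous relation only holds from $k=2$ onward, because the first two rows of $L_s$ are inhomogeneous, so the induction must be seeded with \emph{both} base cases $k=0,1$ for each column rather than one. Second, one must set up the trigonometry so that the closed forms for $U_k$ and $V_k$ both fall out of the single product-to-sum identity with the correct index shift; getting the shift and the power of $r$ to match is the only place an error could creep in. Once these are arranged, the remaining verification is mechanical.
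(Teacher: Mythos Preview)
Your proposal is correct and is essentially the paper's own argument: both derive the recurrence $u_{k+1}=-a_1u_k-a_0u_{k-1}$ from $R_sT_s=L_s$, verify the two base cases $k=0,1$ in each column, and close the induction via the identity $2\cos\theta\,\sin m\theta-\sin(m-1)\theta=\sin(m+1)\theta$. Your framing (first showing the closed forms $U_k,V_k$ satisfy the recurrence, then matching initial data) and your explicit remark that the homogeneous relation only starts at $k=2$ are slightly crisper than the paper's presentation, but the substance is the same.
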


\begin{proof}
We will prove by induction on $k$, much like in the exposition above.  Note that

\[\begin{array}{crcl} & T_{s}  & = & R_s^{-1}L_s \\
\iff & \left[
\begin{array}
[c]{ccccc}%
1 &  &  & \\
a_{1} & 1 &  & \\
a_{0} & a_{1} & 1 & \\
& \ddots & & \ddots & \\
&& a_{0} & a_{1} & 1
\end{array}
\right]\left[
\begin{array}
[c]{cc}%
T_{00} & T_{01}\\
T_{10} & T_{11}\\
T_{20} & T_{21} \\
\vdots & \vdots \\
T_{s0} & T_{s1} 
\end{array}
\right] & = & \left[
\begin{array}
[c]{cc}%
a_{0} & a_{1}\\
& a_{0}\\
& \\
& \\
& 
\end{array}
\right] \end{array}\]

\begin{description}
\item[\textsf{Base Cases:}] $k=0$ and $k=1$ \\
For $T_{k0}$ we have:
\begin{align*}
T_{00} & =a_0=r^2=r^2 \cdot \dfrac{\sin(\theta)}{\sin(\theta)} \\
T_{10} & =-a_1T_{00}=2r^3\cos(\theta)=r^3 \cdot \dfrac{\sin(2\theta)}{\sin(\theta)}  \end{align*}
For $T_{k1}$ we have:
\begin{align*} T_{01} & =a_1=-2r\cos(\theta)=-r \cdot \dfrac{\sin(2\theta)}{\sin(\theta)} \\
T_{11} & =a_0-a_1T_{01}=r^2-4r^2\cos^2(\theta)=-r^2 \cdot \dfrac{\sin(3\theta)}{\sin(\theta)}
\end{align*}
The base cases are proved.

\item[\textsf{Inductive Hypothesis:}] Assume for $k \geq 0$ we have
$$T_{k0}=+r^{k+2}\dfrac{\sin\left(  k+1\right)  \theta}{\sin\theta} \ \ \ \ \text{and}\ \ \ \ T_{k1}=-r^{k+1}\dfrac{\sin(k+2)\theta}{\sin\theta}.$$

\item[\textsf{Induction Step:}] Show that 
\[%
\begin{array}
[c]{rcl}%
T_{k+1,0} & = & +r^{(k+1)+2}\dfrac{\sin\left(  (k+1)+1\right)  \theta}{\sin\theta} =+r^{k+3}\dfrac{\sin\left(  k+2\right)  \theta}{\sin\theta} \vspace{0.2cm}\\
T_{k+1,1} & = & -r^{(k+1)+1}\dfrac{\sin((k+1)+2)\theta}{\sin\theta}=-r^{k+2}\dfrac{\sin(k+3)\theta}{\sin\theta}
\end{array}
\]
In general, we have
$$T_{k+1,0}=-a_0T_{k-1,0}-a_1T_{k0}\ \ \ \ \text{and}\ \ \ \ T_{k+1,1}=-a_0T_{k-1,1}-a_1T_{k1}$$
Then
\begin{align*} T_{k+1,0} & =-r^2 \cdot r^{k+1} \cdot \dfrac{\sin\left(  k \theta\right) }{\sin\theta}+2r\cos(\theta) \cdot r^{k+2} \cdot \dfrac{\sin\left(  k+1\right)  \theta}{\sin\theta}  \\
 & =r^{k+3} \left(\dfrac{2\cos(\theta)\sin(k+1)\theta-\sin(k\theta)}{\sin(\theta)}\right) \\
& =r^{k+3} \cdot \dfrac{\sin(k+2)\theta}{\sin(\theta)}\ \ \ \text{ by the multiple angle formula in Section 6.5.10 of \cite{Z18}} \\
T_{k+1,1} & =-r^2 \cdot -r^{k} \cdot \dfrac{\sin(k+1)\theta}{\sin\theta}+2r\cos(\theta) \cdot -r^{k+1} \cdot \dfrac{\sin(k+2)\theta}{\sin\theta} \\
& =-r^{k+2}\left(\dfrac{2\cos(\theta)\sin(k+2)\theta-\sin(k+1)\theta}{\sin(\theta)}\right) \\
& =-r^{k+2} \cdot \dfrac{\sin(k+3)\theta}{\sin(\theta)}\ \ \ \text{ by the multiple angle formula in Section 6.5.10 of \cite{Z18}} \end{align*}
Hence, the induction step is proved.
\end{description}
\end{proof}

In the lemma below, we prove the existence of a multiplier with degree equal to the bound in \\ Theorem \ref{thm:optimal2}. \bigskip

\begin{lemma}
\label{lem:d2} Let $f\in\mathbb{R}[x]$ be such that $\deg(f)=2$ without real
roots. Let $re^{\pm i\theta}$ be the
roots of $f$ and
\[
s=\left\lceil \frac{\pi}{\theta}\right\rceil -2.
\]
Then
\[
\underset{g\neq0,\ \deg\left(  g\right)  \leq s}{\exists}%
\ \operatorname*{coeffs}\left(  gf\right)  \geq0.
\]

\end{lemma}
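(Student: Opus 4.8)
The plan is to invoke the equivalence established at the end of Section~\ref{sec:LA}: there exists a nonzero $g$ with $\deg(g)\le s$ and $\operatorname*{coeffs}(gf)\ge 0$ if and only if there exists a nonzero row vector $c\in\mathbb{R}^{1\times(s+1)}$ with $c\ge 0$ and $cT_s\ge 0$. Hence it suffices to exhibit a single such $c$, and everything will come down to reading off the entries of $T_s$ supplied by Lemma~\ref{lem:Ast2}.

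The candidate I would try is the last standard basis vector $c=(0,\dots,0,1)$. It is manifestly nonzero and nonnegative, and $cT_s=(T_{s0},T_{s1})$ simply extracts the bottom row of $T_s$. By Lemma~\ref{lem:Ast2}, $T_{s0}=r^{s+2}\dfrac{\sin(s+1)\theta}{\sin\theta}$ and $T_{s1}=-r^{s+1}\dfrac{\sin(s+2)\theta}{\sin\theta}$, so (using $r>0$ and $\sin\theta>0$, valid since $0<\theta<\pi$) the inequality $cT_s\ge 0$ reduces to the two sign statements $\sin(s+1)\theta\ge 0$ and $\sin(s+2)\theta\le 0$.

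This is exactly where the value $s=\lceil \pi/\theta\rceil-2$ earns its keep, via the elementary estimate $x\le\lceil x\rceil<x+1$ applied to $x=\pi/\theta$. Since $0<\theta<\pi$ we have $\pi/\theta>1$, hence $\lceil\pi/\theta\rceil\ge 2$ and $s\ge 0$, so the statement is non-vacuous. From $\lceil\pi/\theta\rceil-1<\pi/\theta$ I get $0<(s+1)\theta<\pi$, so $\sin(s+1)\theta>0$ and $T_{s0}>0$. From $\lceil\pi/\theta\rceil\ge\pi/\theta$ together with $\lceil\pi/\theta\rceil<\pi/\theta+1$ I get $\pi\le(s+2)\theta<\pi+\theta<2\pi$, so $\sin(s+2)\theta\le 0$ (with equality exactly when $\pi/\theta\in\mathbb{Z}$), hence $T_{s1}\ge 0$. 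Thus $c\ge 0$ and $cT_s\ge 0$, and the equivalence yields a nonzero $g$ with $\deg(g)\le s$ and $\operatorname*{coeffs}(gf)\ge 0$.

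I do not anticipate a genuine obstacle; the only delicate points are the two boundary facts — that $s\ge 0$, and that $(s+2)\theta$ cannot reach $2\pi$ (which would reverse the sign of $T_{s1}$) — both handled by the $\lceil\cdot\rceil$ estimate above. It is worth remarking that this particular $c$, pulled back through $b=cR_s^{-1}$, is precisely the coefficient vector of the multiplier $g_R$ of Theorem~\ref{thm:certificate2}, so the argument is constructive and sets up the explicit description developed in Section~\ref{sec:new}.
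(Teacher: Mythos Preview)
Your proposal is correct and follows essentially the same route as the paper: you invoke the equivalence from Section~\ref{sec:LA}, choose the same candidate $c=(0,\dots,0,1)$, use Lemma~\ref{lem:Ast2} to read off $T_{s0}$ and $T_{s1}$, and then verify the requisite sign conditions on $\sin(s+1)\theta$ and $\sin(s+2)\theta$ via the ceiling estimate on $\pi/\theta$. Your write-up is slightly more explicit about the boundary checks ($s\ge 0$ and $(s+2)\theta<2\pi$) and about the link to $g_R$, but the argument is the same.
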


\begin{proof}
Recall from Section 3 we have 
$$\underset{g\neq0,\ \deg\left(  g\right)  \leq s}{\exists
}\ \operatorname*{coeffs}\left(  gf\right)  \geq0 \ \ \iff \ \ \underset{c\neq0}{\exists}\ cT_{s}\geq0\text{ and }c\geq0$$
A simple non-zero and non-negative $c$-vector is $\left[\begin{array}{cccc} 0 & \cdots & 0 & 1 \end{array}\right]$.  We will verify that $cT_{s}\geq0$ holds for this $c$-vector and $s$-value.

\begin{align*}
& \ \ \ cT_{s}\geq0 \ \ \wedge \ \ c=\left[\begin{array}{cccc} 0 & \cdots & 0 & 1 \end{array}\right] \\
\iff  &  \ \ \ T_{s0},T_{s1}\geq0\\
\Longleftarrow\, \ &  \ \ \ \sin\left(  s+1\right)  \theta>0\;\wedge
\;\sin\left(  s+2\right)  \theta\leq0\ \text{(from Lemma \ref{lem:Ast2})}\\
\Longleftarrow\, \  &  \ \ \ 0<(s+1)\theta<\pi\;\wedge\;\pi\leq(s+2)\theta
<2\pi\\
\iff &  \ \ \ s<\dfrac{\pi}{\theta}-1\;\wedge\;s\geq\dfrac{\pi
}{\theta}-2\\
\iff &  \ \ \ \dfrac{\pi}{\theta}-2\leq s <\dfrac{\pi}{\theta
}-1\\
\Longleftarrow\, \  &  \ \ \ s=\left\lceil \dfrac{\pi}{\theta}\right\rceil -2
\end{align*}

\end{proof}

Finally, we prove that the bound on the degree of the multiplier given in Theorem \ref{thm:optimal2} is optimal. \bigskip

\begin{proof}
[Proof of Theorem \ref{thm:optimal2}] When $\dfrac{\pi}{2}\leq\theta<\pi$, it
is obvious as $s=0$. Thus assume that $0<\theta<\dfrac{\pi}{2}$.

Let $g$ be such that $g\neq0$ and $\deg(g)=t<s$.  Since $s=\left\lceil
\dfrac{\pi}{\theta}\right\rceil -2$, we have
\[\begin{array}{crcl} & t & < & s \\
\iff & t+2 & \leq & s+1 \\
\iff & (t+2)\theta & \leq & (s+1)\theta \\
\ \, \Longrightarrow & \sin(t+2)\theta & > & 0  \end{array}\]
Let $T_t$ be the $T$-matrix for this multiplier.  Then for all $0 \leq k \leq t$, we have
\[
T_{tk1}=-r^{k+1}\dfrac{\sin(k+2)\theta}{\sin\theta}<0
\]
Then there is no $c \geq 0$ where $cT_t \geq 0$.  Thus, we conclude that$\ \operatorname*{coeffs}\left(
gf\right)  \geq0$ is false.
\end{proof}

\section{A New Poincar\'{e} Multiplier (Theorems \ref{thm:certificate2} and
\ref{thm:coeffsHRM2})}

\label{sec:new}

The linear algebra approach naturally leads to the certificate provided in
Theorem \ref{thm:certificate2}. This certificate for the multiplier is unique
from the one provided originally by Meissner, which will be shown in Section
\ref{sec:compare}.

First, we prove Theorem \ref{thm:certificate2}, the form of a new Poincar\'{e}
multiplier $g$, using linear algebra. We know from Section 3 that we can describe a product $gf$ by giving the matrix product
$c\left[ T_{s} |I\right] $. \bigskip

\begin{proof}
[Proof of Theorem \ref{thm:certificate2}] From Lemma \ref{lem:d2}, we know
$T_{s0},T_{s1}\geq0$ and it suffices to choose $c=\left[
\begin{array}
[c]{cccc}%
0 & \cdots & 0 & 1
\end{array}
\right]  $.  
Now let us find an explicit expression for $g_{R}$.  Let $x_s$ be a column vector of powers of $x$ from $x^0$ to $x^s$.
\begin{align*}
g_{R}  &  =bx_{s}\\
&  =cR_{s}^{-1}x_{s}\ \ \ \ \text{since }c=bR_{s}\\
&  =\left[
\begin{array}
[c]{cccc}%
0 & \cdots & 0 & 1
\end{array}
\right]  R_{s}^{-1}x_{s}\\
&  =\left(  R_{s}^{-1}x_{s}\right)  _{s}\\
&  =\frac{\left\vert
\begin{array}
[c]{ccccc}%
a_{2} &  &  &  & x^{0}\\
a_{1} & \ddots &  &  & \vdots\\
a_{0} &  & \ddots &  & \vdots\\
& \ddots &  & \ddots & \vdots\\
&  & a_{0} & a_{1} & x^{s}%
\end{array}
\right\vert }{\left\vert
\begin{array}
[c]{ccccc}%
a_{2} &  &  &  & \\
a_{1} & \ddots &  &  & \\
a_{0} &  & \ddots &  & \\
& \ddots &  & \ddots & \\
&  & a_{0} & a_{1} & a_{2}%
\end{array}
\right\vert }\mathbb{\ \ }\text{from Cramer's rule}\\
&  =\left\vert
\begin{array}
[c]{ccccc}%
a_{2} &  &  &  & x^{0}\\
a_{1} & \ddots &  &  & \vdots\\
a_{0} &  & \ddots &  & \vdots\\
& \ddots &  & \ddots & \vdots\\
&  & a_{0} & a_{1} & x^{s}%
\end{array}
\right\vert \ \ \text{since }a_{2}=1
\end{align*}

\end{proof}

In order to compare the new multiplier to Meissner's, we'd like to derive a
formula for the coefficients in terms of $r$ and $\theta$. Within the proof of Theorem \ref{thm:coeffsHRM2}, we will provide a motivation for the coefficient formula to be proved. \bigskip

\begin{proof}[Proof of Theorem \ref{thm:coeffsHRM2}]

Recall that
\begin{align*} c & =bR_s \\
\left[\begin{array}{cccc} 0 & \cdots & 0 & 1 \end{array}\right] & =\left[\begin{array}{cccc} b_0 & \cdots & b_{s-1} & b_s \end{array}\right]\left[\begin{array}{ccccc} 1 \\ a_1 & 1 \\ a_0 & a_1 & 1 \\ & \ddots & \ddots & \ddots \\ & & a_0 & a_1 & 1 \end{array}\right] \\
& =\left[\begin{array}{ccccc} b_0+b_1a_1+b_2a_0 & \cdots & b_{s-2}+b_{s-1}a_1+b_sa_0 & b_{s-1}+b_sa_1 & 1 \end{array}\right] \end{align*}
Recall that $b_s=1$.  Then we have 
\begin{align*} b_{s-1} & =-a_1=2r\cos(\theta) \\
& =\dfrac{r\sin(2\theta)}{\sin(\theta)} \\
b_{s-2} & =-b_{s-1}a_1-b_sa_0 \\
& =-\left(\dfrac{r\sin(2\theta)}{\sin(\theta)}\right)\left(-2r\cos(\theta)\right)-r^2\left(\dfrac{\sin(\theta)}{\sin(\theta)}\right) \\
& =\dfrac{2r^2\sin(2\theta)\cos(\theta)-r^2\sin(\theta)}{\sin(\theta)} \\
& = \dfrac{r^2\sin(3\theta)}{\sin(\theta)} \end{align*}
We can see that for $j \geq 2$,
\begin{align*} b_{s-j} & =-b_{s-j+1}a_1-b_{s-j+2}a_0  \end{align*}
Based on $b_{s-1}$ and $b_{s-2}$, we expect that
$$b_{s-j}=r^j \cdot \dfrac{\sin(j+1)\theta)}{\sin(\theta)}.$$
Note we can say $b_{s-j}=\dfrac{T_{j0}}{r^2}$. \bigskip
We will prove the above formula for $b_{s-j}$ holds in general by induction on $j$.
\begin{description}
\item[\textsf{Base Case:}] $j=0$ \\
We have
\begin{align*} 
b_s & =1=r^0 \cdot \dfrac{\sin(\theta)}{\sin(\theta)} \end{align*}
The base case is proved.

\item[\textsf{Inductive Hypothesis:}] Assume for $j \geq 0$ we have $b_{s-j}=r^j \cdot \dfrac{\sin(j+1)\theta}{\sin(\theta)}$.

\item[\textsf{Induction Step:}] Show that 
$$b_{s-(j+1)}=b_{s-j-1}=r^{j+1} \cdot \dfrac{\sin\left(j+2\right)\theta}{\sin(\theta)}.$$
We have
\begin{align*} b_{s-j-1} & =-b_{s-j}a_1-b_{s-j+1}a_0 \\
& =-\left(r^j \cdot \dfrac{\sin(j+1)\theta}{\sin(\theta)}\right)\left(-2r\cos(\theta)\right)-\left(r^{j-1} \cdot \dfrac{\sin(j)\theta}{\sin(\theta)}\right)r^2 \\
& =r^{j+1}\left(\dfrac{2\cos(\theta)\sin(j+1)\theta-\sin(j)\theta}{\sin(\theta)}\right) \\
& =r^{j+1}\left(\dfrac{\sin(j+2)\theta}{\sin(\theta)}\right) \ \ \ \ \text{by the multiple angle formula in Section 6.5.10 of } \cite{Z18} \end{align*}
Hence, the induction step is proved.
\end{description}

\end{proof}

Finally, we will prove that these coefficients are positive.
\begin{lemma}
\label{lem:pos} We have $b_{s-j} \geq0$ for $j=0,\ldots,s$.
\end{lemma}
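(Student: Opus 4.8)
The plan is to deduce this immediately from the closed form already established in Theorem \ref{thm:coeffsHRM2}, namely $b_{s-j} = r^j \cdot \frac{\sin(j+1)\theta}{\sin\theta}$. First I would note that since $re^{\pm i\theta}$ are the non-real roots of $f$ we have $r > 0$, and that the root with positive imaginary part has $0 < \theta < \pi$, so $\sin\theta > 0$. Hence $r^j > 0$ and $1/\sin\theta > 0$, and the sign of $b_{s-j}$ is exactly the sign of $\sin(j+1)\theta$; it therefore suffices to check that $\sin(j+1)\theta \geq 0$ for every $j \in \{0, \ldots, s\}$.

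For that, I would use $s = \left\lceil \pi/\theta \right\rceil - 2$. For $0 \leq j \leq s$ we get $1 \leq j+1 \leq s+1 = \left\lceil \pi/\theta \right\rceil - 1$, and the elementary inequality $\left\lceil x \right\rceil < x+1$ (valid for all real $x$) yields $\left\lceil \pi/\theta \right\rceil - 1 < \pi/\theta$, hence $(j+1)\theta < \pi$. Since also $(j+1)\theta \geq \theta > 0$, the angle $(j+1)\theta$ lies in $(0,\pi)$, on which $\sin$ is strictly positive. Thus $b_{s-j} > 0$, which in particular gives $b_{s-j} \geq 0$ as claimed.

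The hard part will be essentially nonexistent: the only care needed is the ceiling manipulation and confirming the whole index range is covered, which it is because the extreme case $j = s$ already satisfies $(s+1)\theta < \pi$ and the relevant inequalities are monotone in $j$. If one preferred to keep Lemma \ref{lem:pos} self-contained rather than leaning on Theorem \ref{thm:coeffsHRM2}, an alternative is to argue directly from the recursion $b_{s-j} = -b_{s-j+1}a_1 - b_{s-j+2}a_0$ together with the sign information on $T_{s0}, T_{s1}$ obtained in Lemma \ref{lem:d2}, but invoking the already-proved coefficient formula is by far the shortest route.
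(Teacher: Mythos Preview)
Your proposal is correct and follows essentially the same route as the paper: both invoke the closed form $b_{s-j}=r^{j}\sin(j+1)\theta/\sin\theta$ from Theorem~\ref{thm:coeffsHRM2}, reduce to showing $\sin(j+1)\theta\geq 0$, and finish by observing $(j+1)\theta\in(0,\pi)$ via $(s+1)\theta<\pi$. The only cosmetic difference is that the paper cites the proof of Lemma~\ref{lem:d2} for $(s+1)\theta<\pi$, whereas you re-derive it directly from $s=\lceil\pi/\theta\rceil-2$ and the inequality $\lceil x\rceil<x+1$.
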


\begin{proof}
Recall that
\begin{align*}
b_{s-j} & =r^{j} \cdot\dfrac{\sin(j+1)\theta}{\sin(\theta)}
\ \ \ \ \text{from Theorem } \ref{thm:coeffsHRM2}%
\end{align*}
Note
\[%
\begin{array}
[c]{crcl}
& b_{s-j} & \geq & 0\\
\iff & r^{j} \cdot\dfrac{\sin(j+1)\theta}{\sin(\theta)} & \geq & 0\\
\iff & \sin(j+1)\theta) & \geq & 0\ \ \ \text{since } r,\,\sin(\theta)>0\\
\Longleftarrow & 0<(j+1)\theta & \leq & \pi
\end{array}
\]
Since $j \leq s$ and we know
$(s+1)\theta<\pi$ (from the proof of Lemma \ref{lem:d2}), we have $(j+1)\theta\leq(s+1)\theta<\pi$.
\end{proof}

\section{Comparing Meissner's Multiplier with the New Multiplier}

\label{sec:compare}

In this section, we compare the new multiplier $g_R$ with Meissner's in two ways, by
comparing $c$-vectors and by comparing coefficients. Both analyses show that
the new multiplier is an improvement in some way over Meissner's original multiplier.

In order to conduct this comparison fairly, we derive a monic version of
Meissner's Poincar\'{e} multiplier, as the new multiplier is monic.

\begin{lemma}
[Meissner's Monic Multiplier for Degree 2]\label{thm:meissner} Let
$f=x^{2}+a_{1}x+a_{0}\in\mathbb{R}\left[  x\right]  $ be without real roots.
Then a monic witness for $\operatorname*{coeffs}\left(  gf\right)  \geq0$ is
given by
\[
g_{M}^{*}=\sum_{i=0}^{s} \dfrac{r^{s-i}\, \sin(i+1)\theta}{\sin(s+1)\theta
}\ x^{i}=r^{s}\sum_{i=0}^{s} \dfrac{\sin(i+1)\theta}{\sin(s+1)\theta}\ \left(
\dfrac{x}{r}\right)  ^{i}
\]
where $s=\left\lceil \dfrac{\pi}{\theta} \right\rceil -2$ and $re^{\pm
i\theta}$ are the non-real roots of $f$.
\end{lemma}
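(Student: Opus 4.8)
The statement has two components: that $g_{M}^{*}$ is monic, and that $\operatorname*{coeffs}(g_{M}^{*}f)\ge 0$. Monicity is immediate from the closed form, since the coefficient of $x^{s}$ in $g_{M}^{*}$ is $r^{s-s}\sin(s+1)\theta/\sin(s+1)\theta=1$. So the only real work is the multiplier property, and I see two routes to it.

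The quick route is to observe that $g_{M}^{*}$ is simply a positive rescaling of Meissner's original multiplier $g_{M}$ from Equation \ref{eq:Meissner}. Expanding the powers $(x/r)^{i}$, the leading coefficient of $g_{M}$ is $\lambda:=r^{2-s}\sin(s+1)\theta/\sin\theta$, and one checks directly that $g_{M}=\lambda\,g_{M}^{*}$. Now $\lambda>0$: indeed $r>0$, $\sin\theta>0$ since $0<\theta<\pi$, and $\sin(s+1)\theta>0$ since $0<(s+1)\theta<\pi$ — the latter being exactly the inequality on $s=\lceil\pi/\theta\rceil-2$ already recorded in the proof of Lemma \ref{lem:d2}. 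Since $\operatorname*{coeffs}(g_{M}f)\ge 0$ by Meissner \cite{M11} and $\lambda>0$, we conclude $\operatorname*{coeffs}(g_{M}^{*}f)=\lambda^{-1}\operatorname*{coeffs}(g_{M}f)\ge 0$.

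I would prefer, though, to give a self-contained argument inside the new framework, in keeping with the paper's stated aim of a fully explicit proof. Multiply $g_{M}^{*}$ by $f=x^{2}-2r\cos\theta\,x+r^{2}$ directly: for $2\le m\le s$ the coefficient of $x^{m}$ in $g_{M}^{*}f$ equals
\[
\frac{r^{s-m+2}}{\sin(s+1)\theta}\bigl(\sin(m-1)\theta-2\cos\theta\sin m\theta+\sin(m+1)\theta\bigr),
\]
which vanishes by the product-to-sum identity $\sin(m-1)\theta+\sin(m+1)\theta=2\cos\theta\sin m\theta$; the coefficient of $x^{1}$ vanishes similarly (using $\sin 2\theta=2\sin\theta\cos\theta$). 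Hence $g_{M}^{*}f=x^{s+2}-\dfrac{r\sin(s+2)\theta}{\sin(s+1)\theta}x^{s+1}+\dfrac{r^{s+2}\sin\theta}{\sin(s+1)\theta}$ is a trinomial, and its three surviving coefficients are nonnegative: $\sin\theta>0$ and $\sin(s+1)\theta>0$ as above, while $\sin(s+2)\theta\le 0$ because $\pi\le(s+2)\theta<2\pi$, again from the proof of Lemma \ref{lem:d2}. Equivalently, in the language of Section \ref{sec:LA}, the coefficient vector $b$ of $g_{M}^{*}$ satisfies $bR_{s}=\left[\,0\ \cdots\ 0\ \ {-r\sin(s+2)\theta}/{\sin(s+1)\theta}\ \ 1\,\right]\ge 0$, which is precisely the $c$-vector that will be compared with that of $g_{R}$ later in this section.

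There is no genuine obstacle here: the only content is the trigonometric bookkeeping in the collapse above and keeping straight the signs of $\sin(s+1)\theta$ and $\sin(s+2)\theta$, both of which are forced by the exact value $s=\lceil\pi/\theta\rceil-2$ and are already available from Lemma \ref{lem:d2}. The degenerate range $\pi/2\le\theta<\pi$ — where $s=0$, $g_{M}^{*}=1$, and $g_{M}^{*}f=f$ already has nonnegative coefficients — is covered without special treatment by either route.
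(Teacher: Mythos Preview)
Your proposal is correct. Your first ``quick route'' is exactly the paper's own proof: the paper simply divides Meissner's $g_{M}$ by its leading coefficient and carries out the resulting algebra, relying on \cite{M11} for the multiplier property (you additionally justify $\lambda>0$, which the paper leaves implicit). Your second, self-contained route --- expanding $g_{M}^{*}f$ directly into the trinomial $x^{s+2}-\dfrac{r\sin(s+2)\theta}{\sin(s+1)\theta}\,x^{s+1}+\dfrac{r^{s+2}\sin\theta}{\sin(s+1)\theta}$ and checking signs from the inequalities in Lemma~\ref{lem:d2} --- does not appear in the paper's proof of this lemma. It has the advantage of being independent of \cite{M11}, and it effectively computes the $c$-vector $c_{M}=\bigl[\,0\ \cdots\ 0\ \ r^{2}T_{s1}/T_{s0}\ \ 1\,\bigr]$ that the paper only derives later in Section~\ref{sec:compare}; in that sense your second argument anticipates and streamlines the subsequent comparison.
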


\begin{proof}
Recall Meissner's multiplier from Formula \ref{eq:Meissner} \cite{M11}:%

\[
g_{M}=\sum_{i=0}^{s} \dfrac{r^{2}\sin(i+1)\theta}{\sin(\theta)}\left(
\dfrac{x}{r}\right)  ^{i}%
\]
where
\begin{align*}
s  &  =\left\lceil \dfrac{\pi}{\theta} \right\rceil -2.
\end{align*}
This multiplier can be made monic by dividing all terms by the leading coefficient.%

\begin{align*}
g_{M}^{*}  &  =\dfrac{1}{\dfrac{r^{2}\sin(s+1)\theta}{r^{s}\,\sin(\theta)}%
}\ \ \sum_{i=0}^{s} \dfrac{r^{2}\sin(i+1)\theta}{\sin(\theta)}\left(
\dfrac{x}{r}\right)  ^{i}\\
&  =\dfrac{r^{s}\,\sin(\theta)}{r^{2}\sin(s+1)\theta}\ \ \sum_{i=0}^{s}
\dfrac{r^{2}\sin(i+1)\theta}{\sin(\theta)}\left(  \dfrac{x}{r}\right)  ^{i}\\
&  =\sum_{i=0}^{s} \dfrac{r^{s}\, \sin(i+1)\theta}{\sin(s+1)\theta}\left(
\dfrac{x}{r}\right)  ^{i}\\
&  =\sum_{i=0}^{s} \dfrac{r^{s-i}\, \sin(i+1)\theta}{\sin(s+1)\theta}\ x^{i}\\
&  =r^{s}\sum_{i=0}^{s} \dfrac{\sin(i+1)\theta}{\sin(s+1)\theta}\ \left(
\dfrac{x}{r}\right)  ^{i}%
\end{align*}

\end{proof}

\begin{example}
Recall the polynomial from Example \ref{ex:notequal}, $f=x^{2}-2\cos\left(
\dfrac{2\pi}{7}\right) x+1$, which has non-real roots $1e^{\pm\frac{2\pi}{7}%
i}$ and $s=2$. Recall that the new multiplier, $g_{R}$, is given by
\begin{align*}
g_{R}  &  =x^{2}+2\cos\left( \dfrac{2\pi}{7}\right) x+\left( 4\cos^{2}\left(
\dfrac{2\pi}{7}\right) -1\right) \\
&  \approx x^{2}+1.247x+0.555
\end{align*}

The monic Meissner multiplier, $g_{M}^{*}$, is given by
\begin{align*}
g_{M}^{*}  &  =\sum_{i=0}^{2} \dfrac{1^{2-i}\, \sin(i+1)\theta}{\sin
(2+1)\dfrac{2\pi}{7}}\ x^{i}\\
&  = \dfrac{\sin\left( \dfrac{6\pi}{7}\right) }{\sin\left( \dfrac{6\pi}%
{7}\right) }x^{2}+\dfrac{\sin\left( \dfrac{4\pi}{7}\right) }{\sin\left(
\dfrac{6\pi}{7}\right) }x+\dfrac{\sin\left( \dfrac{2\pi}{7}\right) }%
{\sin\left( \dfrac{6\pi}{7}\right) }\\
&  \approx x^{2}+2.467x+1.802
\end{align*}

Although both multipliers are monic, it is still true that $g_{M}^{*} \neq
g_{R}$.
\end{example}

Naturally, we wonder when these two monic multipliers are the same and when
they are different. As shown below, when they are different, the multiplier
from Theorem \ref{thm:certificate2} has advantages over Meissner's.

\subsection{Comparison via $c$-Vector}

We will recover the $c$-vector such that $\operatorname*{coeffs}\left(  gf\right)=c[T_s|I]$ for
Meissner's monic multiplier in order to compare the two. Recall that the
$c$-vector for the multiplier $g_R$ from Theorem \ref{thm:certificate2} is
$c_{R}=\left[
\begin{array}
[c]{cccc}%
0 & \cdots & 0 & 1
\end{array}
\right]  $.

\begin{lemma}
The $c$-vector for Meissner's monic Poincar\'{e} multiplier is
\[
c_{M}=\left[
\begin{array}
[c]{ccccc}%
0 & \cdots & 0 & r^{2}\, \dfrac{T_{s1}}{T_{s0}} & 1
\end{array}
\right] .
\]

\end{lemma}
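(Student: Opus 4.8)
The plan is to compute $c_M = b^{(M)}R_s$ directly, where $b^{(M)} = [\,b_0^{(M)}\ \cdots\ b_s^{(M)}\,]$ is the coefficient vector of Meissner's monic multiplier $g_M^\ast$ from Lemma~\ref{thm:meissner}, i.e. $b_i^{(M)} = r^{s-i}\sin(i+1)\theta/\sin(s+1)\theta$, and $R_s$ is the $(s+1)\times(s+1)$ lower-triangular banded Toeplitz matrix with $1$ on the diagonal, $a_1 = -2r\cos\theta$ on the first subdiagonal, and $a_0 = r^2$ on the second. Since $\operatorname{coeffs}(g_M^\ast f) = b^{(M)}A_s = c_M[\,T_s\mid I\,]$ with $c_M = b^{(M)}R_s$ by the derivation of Section~\ref{sec:LA}, it suffices to evaluate this row-times-matrix product entry by entry.

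First I would reuse the shape of a generic product $bR_s$ already worked out in the proof of Theorem~\ref{thm:coeffsHRM2}: the $j$-th entry (indexing from $0$) of $bR_s$ is $b_j + a_1 b_{j+1} + a_0 b_{j+2}$ for $0\le j\le s-2$, is $b_{s-1} + a_1 b_s$ for $j=s-1$, and is $b_s$ for $j=s$. The top entry is then immediate: $(c_M)_s = b_s^{(M)} = r^0\sin(s+1)\theta/\sin(s+1)\theta = 1$, which also re-confirms that $g_M^\ast$ is monic. Note $\sin(s+1)\theta\ne 0$ because $0<(s+1)\theta<\pi$, as established in the proof of Lemma~\ref{lem:d2}, so the divisions below are all legitimate.

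Next I would show the ``bulk'' entries vanish. For $0\le j\le s-2$, substituting the closed form for $b^{(M)}$ and pulling out the common factor $r^{s-j}/\sin(s+1)\theta$ reduces $(c_M)_j$ to a multiple of $\sin(j+1)\theta - 2\cos\theta\,\sin(j+2)\theta + \sin(j+3)\theta$, and the product-to-sum identity $\sin(j+1)\theta + \sin(j+3)\theta = 2\cos\theta\,\sin(j+2)\theta$ makes this bracket identically zero. Then I would handle the one remaining entry $j=s-1$: here $(c_M)_{s-1} = b_{s-1}^{(M)} + a_1 b_s^{(M)} = r\sin s\theta/\sin(s+1)\theta - 2r\cos\theta$, and after putting this over the common denominator and applying $2\cos\theta\,\sin(s+1)\theta = \sin(s+2)\theta + \sin s\theta$ it collapses to $-r\,\sin(s+2)\theta/\sin(s+1)\theta$. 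Finally I would identify this with the claimed value: using $T_{s0} = r^{s+2}\sin(s+1)\theta/\sin\theta$ and $T_{s1} = -r^{s+1}\sin(s+2)\theta/\sin\theta$ from Lemma~\ref{lem:Ast2}, one computes $r^2\,T_{s1}/T_{s0} = -r\,\sin(s+2)\theta/\sin(s+1)\theta$, matching exactly, which completes the proof.

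The argument is essentially bookkeeping, so I do not expect a serious obstacle; the only points demanding care are getting the band structure of $R_s$ (and hence the index shifts in $bR_s$) right, and selecting the correct multiple-angle identities — product-to-sum for the bulk entries and the $2\cos\theta\sin k\theta = \sin(k+1)\theta + \sin(k-1)\theta$ expansion for the penultimate entry. As a sanity check I would verify the resulting $c_M$ against the worked computation in Example~\ref{ex:notequal}.
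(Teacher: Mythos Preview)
Your proposal is correct and follows essentially the same approach as the paper: compute $c_M = b^{(M)}R_s$ entry by entry, using the band structure of $R_s$ and the same trigonometric cancellations for the cases $j=s$, $j=s-1$, and $0\le j\le s-2$. The only cosmetic difference is that you invoke the product-to-sum identity $2\cos\theta\,\sin(s+1)\theta = \sin(s+2)\theta + \sin s\theta$ directly for the penultimate entry, whereas the paper expands via repeated angle-addition formulas to reach the same expression.
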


\begin{proof}
From Lemma \ref{thm:meissner}, for $f=x^{2}-2r\cos(\theta)+r^{2}$ we have
\[
g_{M}^{*}=\sum_{i=0}^{s}\dfrac{r^{s-i}\,\sin(i+1)\theta}{\sin(s+1)\theta
}\ x^{i}.
\]
Then
\begin{align*}
c  &  =b\,R_{s} \\
&  =\left[
\begin{array}
[c]{cccc}%
\dfrac{r^{s}\,\sin(\theta)}{\sin(s+1)\theta} & \dfrac{r^{s-1}\,\sin(2\theta
)}{\sin(s+1)\theta} & \cdots & \dfrac{r^{0}\,\sin(s+1)\theta}{\sin(s+1)\theta}%
\end{array}
\right]  \left[
\begin{array}
[c]{ccccc}%
1 &  &  &  & \\
-2r\cos(\theta) & \ddots &  &  & \\
r^{2} & \ddots & \ddots &  & \\
& \ddots & \ddots & \ddots & \\
&  & r^{2} & -2r\cos(\theta) & 1
\end{array}
\right]  .
\end{align*}
We examine the entries of Meissner's $c$-vector.

\begin{description}
\item[\textsf{Case $i=s:$}]
\[
c_{s}=1\cdot\dfrac{r^{0}\,\sin(s+1)\theta}{\sin(s+1)\theta}=1
\]

\item[\textsf{Case $i=s-1:$}]
\begin{align*}
c_{s-1}  &  =1\cdot\dfrac{r^{1}\,\sin((s-1)+1)\theta}{\sin(s+1)\theta}%
-2r\cos(\theta)\cdot\dfrac{r^{0}\,\sin(s+1)\theta}{\sin(s+1)\theta}\\
&  =\dfrac{r\,\sin(s\,\theta)}{\sin(s+1)\theta}-2r\cos(\theta)\\
&  =r\left(  \dfrac{\sin(s\,\theta)}{\sin(s+1)\theta}-2\cos(\theta)\right) \\
&  =r\dfrac{\sin(s\,\theta)-2\cos(\theta)\sin(s+1)\theta}{\sin(s+1)\theta}\\
&  =r\dfrac{\sin(s\,\theta)-2\cos(\theta)\left[  \sin(s\,\theta)\,\cos
(\theta)+\cos(s\,\theta)\,\sin(\theta)\right]  }{\sin(s+1)\theta}\\
&  =r\dfrac{\sin(s\,\theta)-\left(  2\sin(s\,\theta)\,\cos^{2}(\theta
)+2\sin(\theta)\,\cos(\theta)\,\cos(s\,\theta)\right)  }{\sin(s+1)\theta}\\
&  =r\dfrac{\sin(s\,\theta)-\left(  \sin(s\,\theta)(1+\cos(2\theta
))+\sin(2\theta)\,\cos(s\,\theta)\right)  }{\sin(s+1)\theta}\\
&  =r\dfrac{-\left(  \sin(s\,\theta)\,\cos(2\theta)+\sin(2\theta
)\,\cos(s\,\theta)\right)  }{\sin(s+1)\theta}\\
&  =-r\frac{\sin\left(  s+2\right)  \theta}{\sin(s+1)\theta}\\
&  =r^{2}\, \frac{T_{s1}}{T_{s0}}\ \ \ \ \text{from Lemma } \ref{lem:Ast2}%
\end{align*}

\item[\textsf{Case $i\leq s-2:$}]
\begin{align*}
c_{i}  &  =1\cdot\dfrac{r^{s-i}\,\sin(i+1)\theta}{\sin(s+1)\theta}%
-2r\cos(\theta)\cdot\dfrac{r^{s-(i+1)}\,\sin((i+1)+1)\theta}{\sin(s+1)\theta
}+r^{2}\cdot\dfrac{r^{s-(i+2)}\,\sin((i+2)+1)\theta}{\sin(s+1)\theta}\\
&  =\dfrac{r^{s-i}\,\sin(i+1)\theta}{\sin(s+1)\theta}-\dfrac{2r^{s-i}%
\,\cos(\theta)\,\sin(i+2)\theta}{\sin(s+1)\theta}+\dfrac{r^{s-i}%
\,\sin(i+3)\theta}{\sin(s+1)\theta}\\
&  =\dfrac{r^{s-i}}{\sin(s+1)\theta}\left(  \sin(i+1)\theta-2\cos
(\theta)\,\sin(i+2)\theta+\sin(i+3)\theta\right) \\
&  =\dfrac{r^{s-i}}{\sin(s+1)\theta}\left(  \sin(i+2)\theta\,\cos(\theta
)-\cos(i+2)\theta\,\sin(\theta)-2\cos(\theta)\,\sin(i+2)\theta+\sin
(i+2)\theta\,\cos(\theta)+\cos(i+2)\theta\,\sin(\theta)\right) \\
&  =0
\end{align*}

\end{description}

Hence, we have $c_{M}=\left[
\begin{array}
[c]{ccccc}%
0 & \cdots & 0 & r^{2}\, \dfrac{T_{s1}}{T_{s0}} & 1
\end{array}
\right] .$
\end{proof}

In comparison, the new multiplier $g_R$  only
contains one non-zero entry rather than two. \bigskip

Now that we have both $c$-vectors, we can determine when the two multipliers are equal.

\begin{corollary}
\label{cor:equal} We have $c_{M}=c_{R}$ when $\theta=\dfrac{\pi}{s+2}$.
\end{corollary}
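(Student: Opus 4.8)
The plan is to read the corollary straight off the two $c$-vectors already computed. Recall $c_{R}=\left[\begin{array}{cccc}0 & \cdots & 0 & 1\end{array}\right]$ and $c_{M}=\left[\begin{array}{ccccc}0 & \cdots & 0 & r^{2}\,T_{s1}/T_{s0} & 1\end{array}\right]$. These vectors agree in every coordinate except possibly the second-to-last, where $c_{R}$ has a $0$ while $c_{M}$ has $r^{2}\,T_{s1}/T_{s0}$. So $c_{M}=c_{R}$ holds exactly when that entry vanishes. First I would record that $T_{s0}\neq 0$: by Lemma \ref{lem:Ast2}, $T_{s0}=r^{s+2}\sin(s+1)\theta/\sin\theta$, and the proof of Lemma \ref{lem:d2} gives $\sin(s+1)\theta>0$ for $s=\lceil\pi/\theta\rceil-2$, so $T_{s0}>0$ (using $r,\sin\theta>0$). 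Hence the quotient is well defined and $c_{M}=c_{R}$ reduces to the single condition $T_{s1}=0$.

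Next I would translate $T_{s1}=0$ into a condition on $\theta$. By Lemma \ref{lem:Ast2}, $T_{s1}=-r^{s+1}\sin(s+2)\theta/\sin\theta$, so $T_{s1}=0$ iff $\sin(s+2)\theta=0$, i.e.\ $(s+2)\theta$ is an integer multiple of $\pi$. The proof of Lemma \ref{lem:d2} establishes the interval $\pi\le(s+2)\theta<2\pi$ for $s=\lceil\pi/\theta\rceil-2$, and the only integer multiple of $\pi$ in $[\pi,2\pi)$ is $\pi$ itself; thus $\sin(s+2)\theta=0$ forces $(s+2)\theta=\pi$, that is $\theta=\frac{\pi}{s+2}$. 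Conversely, if $\theta=\frac{\pi}{s+2}$ then $(s+2)\theta=\pi$, so $\sin(s+2)\theta=0$, hence $T_{s1}=0$ and the second-to-last entry of $c_{M}$ vanishes, giving $c_{M}=c_{R}$. (When $s=0$ the two $c$-vectors are each just $\left[\,1\,\right]$, so the equality is automatic; this is consistent with the corollary, which then reads $\theta=\pi/2$.)

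I do not expect a genuine obstacle here; once both $c$-vectors are in hand the argument is a two-line computation. The only points requiring a little care are (i) noting $T_{s0}\neq 0$ so that passing from the vanishing of $r^{2}T_{s1}/T_{s0}$ to ``$T_{s1}=0$'' is legitimate, and (ii) invoking the bound $\pi\le(s+2)\theta<2\pi$ from the proof of Lemma \ref{lem:d2}, which is exactly what rules out $(s+2)\theta$ being a multiple of $\pi$ other than $\pi$ itself and thereby pins down $\theta=\frac{\pi}{s+2}$.
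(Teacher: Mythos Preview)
Your argument is correct and follows essentially the same route as the paper: compare the two $c$-vectors, reduce $c_{M}=c_{R}$ to the vanishing of $r^{2}T_{s1}/T_{s0}$, and then to $\sin(s+2)\theta=0$ via Lemma~\ref{lem:Ast2}. If anything you are slightly more careful than the paper, which writes $\sin(s+2)\theta=0 \iff (s+2)\theta=\pi$ without justification; you correctly invoke the range $\pi\le (s+2)\theta<2\pi$ from the proof of Lemma~\ref{lem:d2} to single out $\pi$ among the multiples of $\pi$, and you also note the degenerate $s=0$ case.
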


\begin{proof}
Comparing $c$-vectors, the new Poincar\'{e} multiplier is equal to Meissner's
when $r^{2}\,\dfrac{T_{s1}}{T_{s0}}=0$. Note that
\[%
\begin{array}
[c]{crcl}
& r^{2}\,\dfrac{T_{s1}}{T_{s0}} & = & 0 \vspace{0.1cm}\\
\iff & \dfrac{\sin(s+2)\theta}{\sin(s+1)\theta} & = & 0 \ \ \ \ \text{from
Lemma } \ref{lem:Ast2} \text{ and since } r>0\\
\iff & \sin(s+2)\theta & = & 0 \ \ \ \ \text{since } \sin(s+1)\theta \neq 0 \text{
from the proof of Lemma } \ref{lem:d2}\\
\iff & (s+2)\theta & = & \pi\\
\iff & \theta & = & \dfrac{\pi}{s+2}%
\end{array}
\]

\end{proof}
\begin{remark}[Comparison via Partial Ordering on Vectors]
In the vector space $\mathbb{R}^{s+1}$, we can say $c_R \leq c_M$ since $c_{R,s-1} \leq c_{M,s-1}$ and all other entries are equal.
\end{remark}

\subsection{Comparison via the Coefficients in terms of $r$ and $\theta$}

Now that we have compared Meissner's multiplier $g_M^*$ to the new $g_R$ via our linear
algebra framework, we will compare the two via the coefficients in terms
of $r$ and $\theta$. Recall from Remark \ref{rem:coeff} that the coefficients
of the new multiplier are given by
\begin{align*}
\operatorname*{coeff}(g_{R},i)  &  =\dfrac{r^{s-i}\, \sin(s-i+1)\theta}%
{\sin(\theta)}.
\end{align*}
where $s=\left\lceil \dfrac{\pi}{\theta} \right\rceil -2$ and $re^{\pm
i\theta}$ are the non-real roots of $f$. From Lemma \ref{thm:meissner}, the
coefficients of Meissner's monic multiplier are given by
\[
\operatorname*{coeff}(g_{M}^{*},i)=\dfrac{r^{s-i}\, \sin(i+1)\theta}%
{\sin(s+1)\theta}.
\]

\begin{lemma} We have for $i=0,\hdots,s$
$$\operatorname*{coeff}(g_{R},i) \leq \operatorname*{coeff}(g_{M}^*,i).$$
\end{lemma}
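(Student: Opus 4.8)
The plan is to compare the two coefficient formulas term by term, for each fixed index $i$. Writing $j = s-i$ (so $j$ runs from $0$ to $s$), the claim becomes
\[
\frac{r^{j}\,\sin(j+1)\theta}{\sin\theta}\ \leq\ \frac{r^{j}\,\sin(s-j+1)\theta}{\sin(s+1)\theta}.
\]
Since $r^{j}>0$ and, from the proof of Lemma \ref{lem:d2}, we have $0<(s+1)\theta<\pi$ so that $\sin\theta>0$ and $\sin(s+1)\theta>0$, the inequality is equivalent to
\[
\sin(j+1)\theta\,\sin(s+1)\theta\ \leq\ \sin(s-j+1)\theta\,\sin\theta.
\]
So the whole lemma reduces to this trigonometric inequality for all integers $j$ with $0\le j\le s$, under the standing hypothesis $0<(s+1)\theta<\pi$ (equivalently $0<\theta<\pi/(s+1)$).

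The approach I would take for that trigonometric inequality is the product-to-sum identity $2\sin A\sin B = \cos(A-B)-\cos(A+B)$. Applying it to both sides, the left side becomes $\tfrac12\big(\cos(s-j)\theta - \cos(s+j+2)\theta\big)$ and the right side becomes $\tfrac12\big(\cos(s-j)\theta - \cos(s-j+2)\theta\big)$. The common term $\cos(s-j)\theta$ cancels, and after negating, the inequality reduces to
\[
\cos(s-j+2)\theta\ \leq\ \cos(s+j+2)\theta.
\]
Since cosine is decreasing on $[0,\pi]$, this will follow once I check that both arguments lie in $[0,\pi]$ and that $(s-j+2)\theta \ge (s+j+2)\theta$ — but the latter is false unless $j=0$, so I should instead be more careful: I want $\cos$ of the \emph{larger} argument to be $\le \cos$ of the smaller one, i.e. I need $(s+j+2)\theta$ large enough. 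Since $j\ge 0$, we have $(s+j+2)\theta \ge (s-j+2)\theta$, and I want $\cos$ to be non-increasing across this range, which requires both angles to lie in an interval on which cosine is monotone decreasing, namely $[0,\pi]$ (or more generally I can use that $\cos$ is decreasing on $[0,\pi]$ and handle the wrap-around past $\pi$ separately). The key numeric facts are $(s-j+2)\theta \le (s+2)\theta$ and, crucially, from Lemma \ref{lem:d2}'s proof, $(s+2)\theta \ge \pi$ while $(s+1)\theta < \pi$; combined with $\theta < \pi/(s+1)$ this pins down where these angles fall. I would split into the cases according to whether $(s+j+2)\theta \le \pi$, $\le 2\pi$, etc., using the bound $(s+1)\theta<\pi$ to control $j\theta$.

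The main obstacle I anticipate is precisely this case analysis on where the angles $(s-j+2)\theta$ and $(s+j+2)\theta$ sit modulo the monotonicity intervals of cosine: because $(s+2)\theta$ can be as large as nearly $2\pi$ (when $\theta$ is just under $\pi/(s+1)$), the argument $(s+j+2)\theta$ with $j$ up to $s$ can exceed $2\pi$, so a naive "cosine is decreasing" argument does not immediately apply. I expect the cleanest route is to go back to the product-to-sum form $\sin(j+1)\theta\,\sin(s+1)\theta \le \sin(s-j+1)\theta\,\sin\theta$ and prove it directly by a symmetry/rearrangement argument: note that $\sin(m\theta)/\sin\theta$ for $m=1,\dots,s+1$ is exactly $T_{(m-1)0}/r^{m+1}$ up to the established positivity, and all the quantities $\sin\theta,\sin 2\theta,\dots,\sin(s+1)\theta$ are positive by Lemma \ref{lem:d2}; then the inequality says that the sequence $u_m := \sin(m\theta)$ is "log-concave-like" in the sense $u_{j+1}u_{s+1}\le u_{s-j+1}u_1$, which for $1\le j+1\le s-j+1\le s+1$ (i.e. the first index pair is "more spread out" toward the ends) follows from the concavity of $\log\sin$ on $(0,\pi)$ — a standard fact since $(\log\sin t)'' = -1/\sin^2 t < 0$. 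Concretely, $\log u_{j+1} + \log u_{s+1} \le \log u_{s-j+1} + \log u_1$ because $(j+1) + (s+1) = (s-j+1) + (2j+1)$... — I would instead pair the indices as $\{1, s+1\}$ majorizing $\{j+1, s-j+1\}$ (same sum $s+2$, wider spread), so concavity of $\log\sin$ gives $\log u_1 + \log u_{s+1} \le \log u_{j+1} + \log u_{s-j+1}$, which is the \emph{reverse} of what I want — so the honest statement is that I must chase signs carefully here, and the safest fallback is the explicit finite case-split on the value of $(s+j+2)\theta$ relative to $\pi$ and $2\pi$ using only $\pi \le (s+2)\theta < \pi + \theta \le \pi(1 + 1/(s+1))$ and $(s+1)\theta < \pi$. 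I would present the case-split proof as the primary argument and relegate the $\log\sin$-concavity remark to a footnote only if the signs work out.
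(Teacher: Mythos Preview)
Your reduction matches the paper's exactly through the product-to-sum step: after cancelling the common $\cos(s-j)\theta$ term you arrive at the equivalent inequality $\cos(s-j+2)\theta \leq \cos(s+j+2)\theta$. At this point the paper does \emph{not} attempt a monotonicity case-split; it simply applies the companion identity $\cos P - \cos Q = -2\sin\tfrac{P+Q}{2}\sin\tfrac{P-Q}{2}$ once more, giving
\[
\cos(s+j+2)\theta - \cos(s-j+2)\theta \;=\; -2\,\sin(s+2)\theta\,\sin(j\theta).
\]
Now the sign is immediate from facts already established in Lemma~\ref{lem:d2}: $\pi \leq (s+2)\theta < 2\pi$ forces $\sin(s+2)\theta \leq 0$, and $0 \leq j\theta \leq s\theta < \pi$ forces $\sin(j\theta) \geq 0$. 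This is the missing step in your proposal, and it removes the need for any case analysis.

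Your proposed fallback case-split would also succeed, but you have overstated its difficulty: the angle $(s+j+2)\theta$ cannot exceed $2\pi$, since for $j \leq s$ one has $(s+j+2)\theta \leq (2s+2)\theta = 2(s+1)\theta < 2\pi$. With that bound in hand the two cases $\alpha:=(s-j+2)\theta \leq \pi$ and $\alpha > \pi$ are each one line (in the first, reflect $\beta:=(s+j+2)\theta$ to $2\pi-\beta \in (0,\pi]$ and use $\alpha + \beta = 2(s+2)\theta \geq 2\pi$; in the second, both angles lie in $(\pi,2\pi)$ where cosine is increasing). Finally, your instinct on the $\log\sin$ concavity route is right: the pair $\{1,\,s+1\}$ majorizes $\{j+1,\,s-j+1\}$, so concavity yields the inequality in the wrong direction and that route should be dropped.
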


\begin{proof}
Note that
\[\begin{array}{crcl} & \operatorname*{coeff}(g_{R},i) & \leq & \operatorname*{coeff}(g_{M}^*,i) \\
\iff & \dfrac{r^{s-i}\, \sin(s-i+1)\theta}%
{\sin(\theta)} & \leq & \dfrac{r^{s-i}\, \sin(i+1)\theta}%
{\sin(s+1)\theta} \\
\iff & \sin(s+1)\theta\sin(s-i+1)\theta-\sin(i+1)\theta\sin(\theta) & \leq & 0  \end{array}\]
since both $\sin(\theta)$ and $\sin(s+1)\theta)$ are positive.  We have the following identities:
\begin{align}
\sin A\sin B  & =\frac{1}{2}\left(  \cos\left( A-B
\right)  -\cos\left( A+B\right)  \right) \label{sc}   \\
\frac{1}{2}\left(  \cos P-\cos Q\right) & =\sin\left(  \frac{Q+P}{2}\right)  \sin\left(  \frac{Q-P}{2}\right) \label{cs}  
\end{align}
If we label
\begin{align*} 
A_1 & =(s+1)\theta \\
B_1 & =(s-i+1)\theta \\
A_2 & =(i+1)\theta \\
B_2 & =\theta \end{align*}
we can easily verify that%
\[
A_{1}-B_{1}=A_{2}-B_{2}%
\]
Then
\begin{align*} & \sin(A_1)\sin(B_1)-\sin(A_2)\sin(B_2)   \\
=\ & \frac{1}{2}\left(  \cos\left(  A_{1}-B_{1}\right)
-\cos\left(  A_{1}+B_{1}\right)  \right) -\frac{1}{2}\left(  \cos\left(  A_{2}-B_{2}\right)
-\cos\left(  A_{2}+B_{2}\right)  \right) \ \ \text{from } (\ref{sc}) \\
=\ & \frac{1}{2}\left( \cos\left(  A_{2}+B_{2}\right) - \cos\left(  A_{1}+B_{1}\right) \right)  \\
=\ & \sin\left(  \frac{\left(  A_{1}+B_{1}\right)+\left(  A_{2}+B_{2}\right)
}{2}\right)  \sin\left(  \frac{\left(  A_{1}+B_{1}\right)  -\left(
A_{2}+B_{2}\right)  }{2}\right) \ \ \text{from } (\ref{cs}) \\
=\ & \sin\left(  \frac{\left(  A_{1}+B_{1}\right)+\left(  A_1-B_1+B_2+B_{2}\right)
}{2}\right)  \sin\left(  \frac{\left(  A_{1}+B_{1}\right)  -\left(
A_{2}+B_{2}\right)  }{2}\right) \ \ \text{since } A_2=A_1-B_1+B_2 \\
=\ & \sin\left( A_1+B_2\right)  \sin\left(  \frac{\left(  A_{1}+A_1-A_2+B_2\right)  -\left(
A_{2}+B_2\right)  }{2}\right) \ \ \text{since } B_1=A_1-A_2+B_2 \\
=\ & \sin\left(  A_{1}+B_{2}\right)  \sin\left(  A_{1}-A_{2}\right)  \\
=\ & \sin\left(  \left(  s+1\right)  \theta+\theta\right)  \sin\left(  \left(
s+1\right)  \theta-\left(  i+1\right)  \theta\right)  \\
=\ & \sin\left(    s+2\right)  \theta  \sin\left(
s-i\right)  \theta
\end{align*}
Since $\sin(s+2)\theta \leq 0$ and $\sin(s-i)\theta>0$, we have 
$$\sin(s+1)\theta\sin(s-i+1)\theta-\sin(i+1)\theta\sin(\theta)= \sin\left(    s+2\right)  \theta  \sin\left(
s-i\right)  \theta \leq 0.$$
\end{proof}

We investigate this relationship between the coefficient expressions further. We define the ratio between the coefficients to be
\[
\operatorname*{ratio}\left( r,\theta,i\right) =\dfrac{\operatorname*{coeff}%
(g_{R},i)}{\operatorname*{coeff}(g_{M}^{*},i)}=\dfrac{\sin(s-i+1)\theta\, \sin(s+1)\theta}{\sin(\theta)\, \sin
(i+1)\theta}.
\]

Note that $\operatorname*{ratio}\left( r,\theta,i\right) $ is independent
of $r$, so we may simplify to $\operatorname*{ratio}\left( \theta,i\right) $.
We expect from Corollary \ref{cor:equal} that $\operatorname*{ratio}\left(
\theta,i\right) =1$ when $\theta=\dfrac{\pi}{s+2}$, or when $\theta$ can be
written as $\pi$ divided by an integer. Below, we include several graphs
showing $\operatorname*{ratio}\left( \theta,i\right) $ for each $i$ from 0 to
$s-1$ and for ten angles equally spaced between $\dfrac{\pi}{s+2} \leq\theta<
\dfrac{\pi}{s+1}$. In both graphs, the radius is assumed to be 1, as any choice
of radius will do. Finally, note that we did not graph $\operatorname*{ratio}%
\left( \theta,s\right) $ as this is always equal to 1 since both multipliers
are monic.

In each graph, the ten angles are given by
\[
\theta_{i}=\left( 1-\dfrac{i}{10}\right)  \cdot\dfrac{\pi}{s+2}+\dfrac{i}{10}
\cdot\dfrac{\pi}{s+1},\ i=0,\ldots,9.
\]
\begin{figure}[h]
\centering
\begin{tikzpicture}
\node (myfirstpic) at (-4,0) {\includegraphics[width=2.5in]{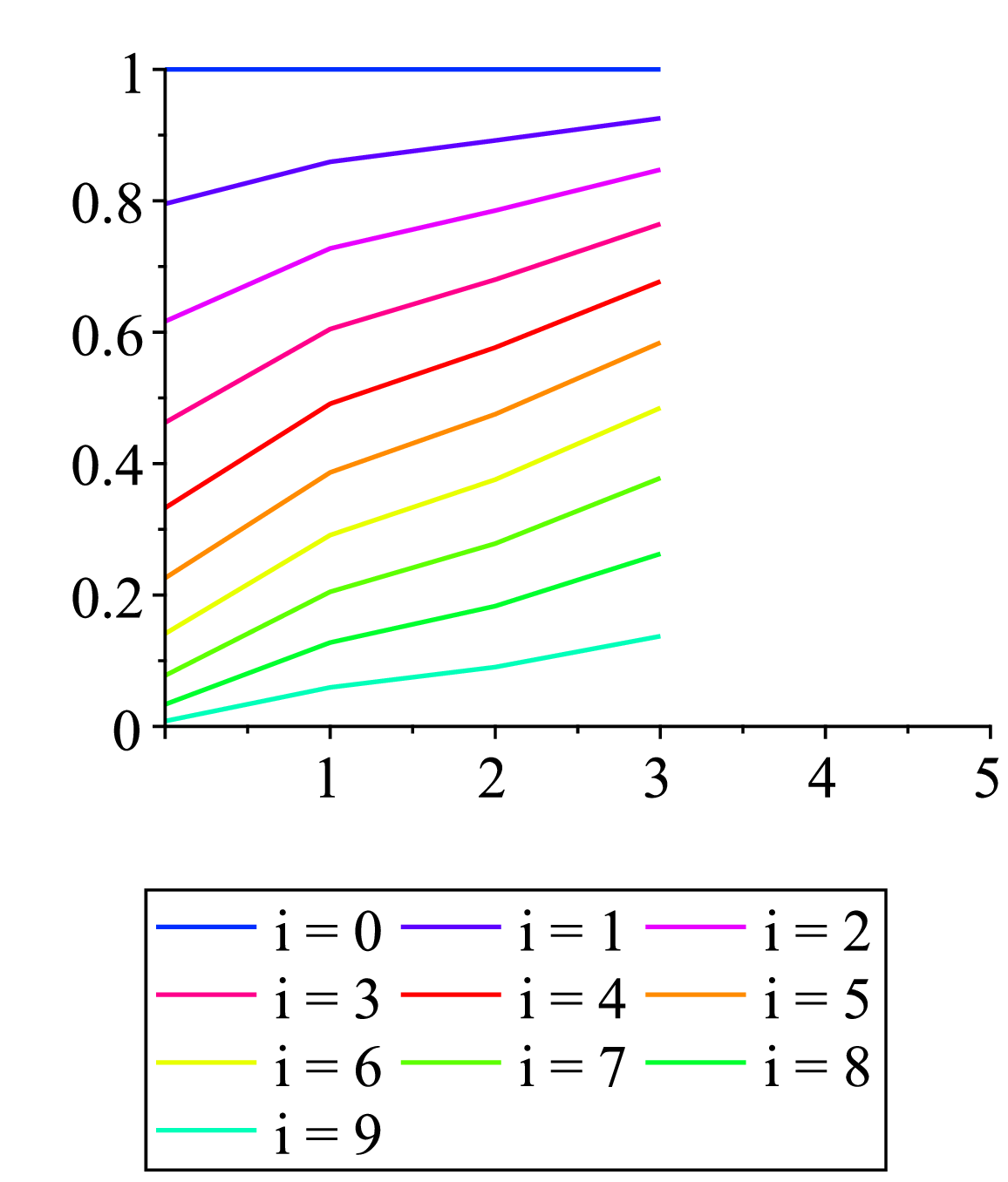}};
\node[right] at (0,3) {In this example, $\dfrac{\pi}{6} \leq\theta<\dfrac{\pi}{5}$ or $s=4$.};
\node[right] at (0,2) {The graph shows $\operatorname*{ratio}\left(\theta,0\right),\ldots,\operatorname*{ratio}\left(\theta,3\right)$.};
\node[right] at (0,1) {We used ten angles between $\dfrac{\pi}{6}$ and $\dfrac{\pi}{5}$ defined by};
\node[right] at (0,0.2) {$\theta_i=\left(1-\dfrac{i}{10}\right) \cdot \dfrac{\pi}{6}+\dfrac{i}{10} \cdot \dfrac{\pi}{5},\ i=0,\ldots,9.$};
\end{tikzpicture}
\caption{Comparison of the Ratio of Coefficients for $s=4$}%
\end{figure}\bigskip

\begin{figure}[h]
\centering
\begin{tikzpicture}
\node (myfirstpic) at (-4,0) {\includegraphics[width=2.5in]{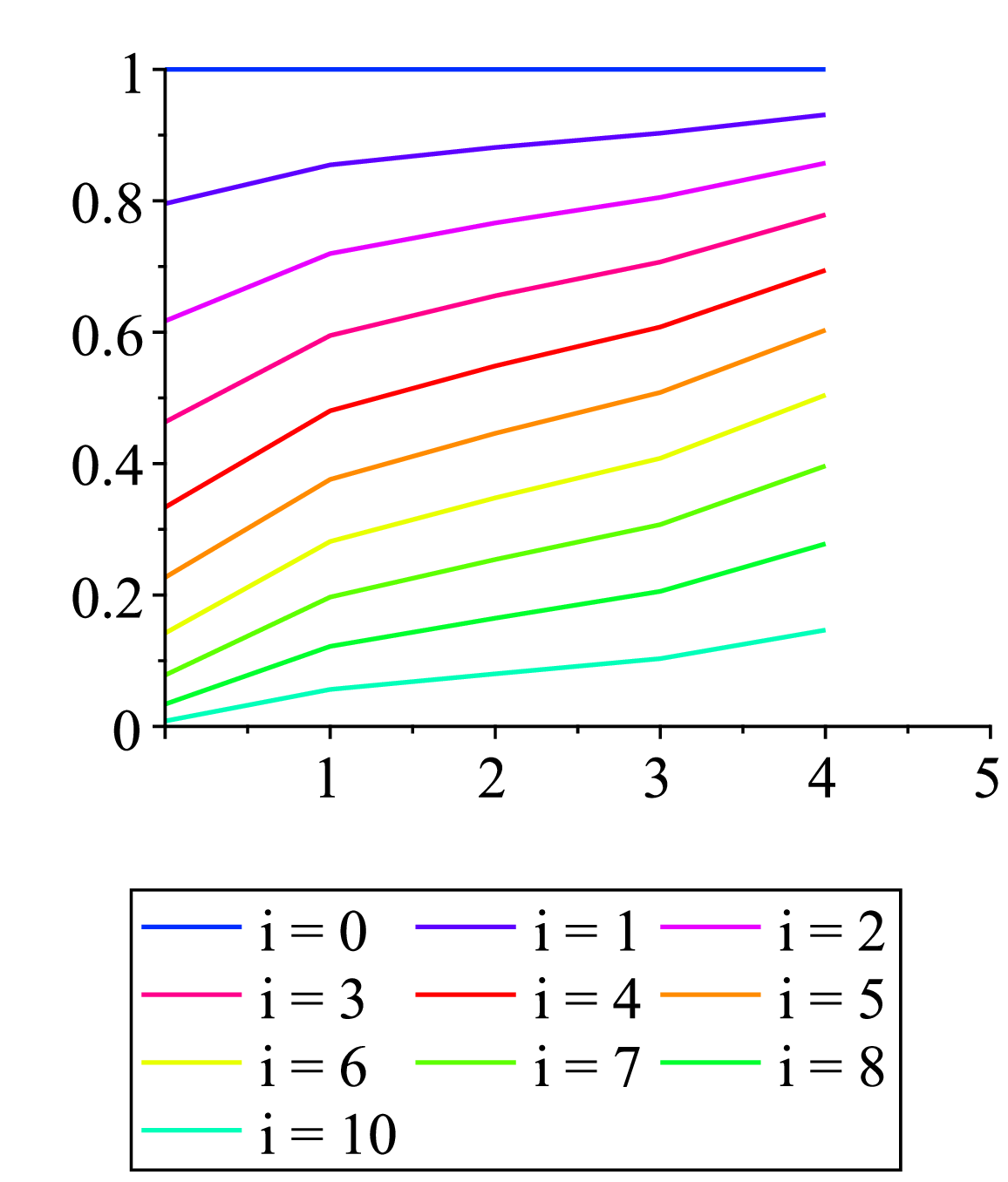}};
\node[right] at (0,3) {In this example, $\dfrac{\pi}{7} \leq\theta<\dfrac{\pi}{6}$ or $s=5$.};
\node[right] at (0,2) {The graph shows $\operatorname*{ratio}\left(\theta,0\right),\ldots,\operatorname*{ratio}\left(\theta,4\right)$.};
\node[right] at (0,1) {We used ten angles between $\dfrac{\pi}{7}$ and $\dfrac{\pi}{6}$ defined by};
\node[right] at (0,0.2) {$\theta_i=\left(1-\dfrac{i}{10}\right) \cdot \dfrac{\pi}{7}+\dfrac{i}{10} \cdot \dfrac{\pi}{6},\ i=0,\ldots,9.$};
\end{tikzpicture}
\caption{Comparison of the Ratio of Coefficients for $s=5$}%
\end{figure}\bigskip

These graphs confirm Corollary \ref{cor:equal}, showing that the coefficients
are equal when $\theta=\dfrac{\pi}{s+2}$. The new multiplier has smaller
coefficients for the angles $\dfrac{\pi}{s+2}<\theta<\dfrac{\pi}{s+1}$. The
coefficients of the new multiplier are significantly smaller than Meissner's
coefficients for those angles close to but not equal to $\dfrac{\pi}{s+1}$.

\begin{lemma}
We have $\displaystyle\lim_{\theta\to\frac{\pi}{s+1}}\ \operatorname{ratio}%
\left( \theta,i\right) =0$ for all $i$.
\end{lemma}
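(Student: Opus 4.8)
The plan is to read the limit off the closed form
$\operatorname{ratio}\left(\theta,i\right)=\dfrac{\sin(s-i+1)\theta\,\sin(s+1)\theta}{\sin(\theta)\,\sin(i+1)\theta}$
by treating each of the four sine factors separately. The key point is that $s=\left\lceil\pi/\theta\right\rceil-2$ is constant while $\theta$ ranges over the half-open interval $\left[\pi/(s+2),\,\pi/(s+1)\right)$; so along the limit $\theta\to\pi/(s+1)$ the integer $s$ is fixed, every factor above is a continuous function of $\theta$, and it suffices to evaluate each factor at the endpoint $\theta=\pi/(s+1)$ --- provided the denominator does not vanish there. Note also that the statement is vacuous unless $s\geq1$, which we may therefore assume.

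First I would treat the numerator: the factor $\sin(s+1)\theta$ tends to $\sin\pi=0$, while $\sin(s-i+1)\theta$ tends to the finite value $\sin\dfrac{(s-i+1)\pi}{s+1}$; hence the numerator tends to $0$.

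Next comes the only step requiring care, the denominator. Since $s\geq1$ we have $0<\pi/(s+1)\leq\pi/2<\pi$, so $\sin\theta\to\sin\dfrac{\pi}{s+1}\neq0$. For the remaining factor, the hypothesis $0\leq i\leq s-1$ gives $1\leq i+1\leq s$, so $\dfrac{(i+1)\pi}{s+1}\in(0,\pi)$ and $\sin(i+1)\theta\to\sin\dfrac{(i+1)\pi}{s+1}\neq0$. Thus the denominator tends to a nonzero limit.

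Combining the two observations, $\operatorname{ratio}(\theta,i)$ is a quotient whose numerator tends to $0$ and whose denominator tends to a nonzero limit, so the quotient tends to $0$. The only genuine obstacle is the bookkeeping that keeps the denominator bounded away from $0$; this is exactly why the lemma excludes $i=s$, for there the factor $\sin(i+1)\theta=\sin(s+1)\theta$ in the denominator matches the factor $\sin(s+1)\theta$ in the numerator that would otherwise force the limit to $0$, and $\operatorname{ratio}(\theta,s)\equiv1$ instead (both multipliers being monic).
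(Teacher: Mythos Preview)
Your proof is correct and follows essentially the same approach as the paper's: split the ratio into its four sine factors, observe that $\sin(s+1)\theta\to 0$ forces the numerator to vanish, and check that the denominator stays bounded away from $0$. In fact your treatment of the denominator is more careful than the paper's, since you explicitly restrict to $0\le i\le s-1$ and explain why $i=s$ must be excluded (where $\operatorname{ratio}(\theta,s)\equiv 1$), whereas the paper's argument that the remaining factor is ``defined and finite'' leans on the inequality $(i+1)\theta\le(s+1)\theta<\pi$, which at the limit point fails to rule out a vanishing denominator when $i=s$.
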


\begin{proof}
Note that
\begin{align*}
\operatorname{ratio}\left( \theta,i\right)   &  =\dfrac{\sin(s-i+1)\theta\,
\sin(s+1)\theta}{\sin(\theta)\, \sin(i+1)\theta}%
\end{align*}
Then
\begin{align*}
\lim_{\theta\to\frac{\pi}{s+1}}\ \operatorname{ratio}\left( \theta,i\right)
&  =\lim_{\theta\to\frac{\pi}{s+1}}\ \dfrac{\sin(s-i+1)\theta\, \sin
(s+1)\theta}{\sin(\theta)\, \sin(i+1)\theta}\\
&  =\left( \lim_{\theta\to\frac{\pi}{s+1}}\ \dfrac{\sin(s-i+1)\theta}%
{\sin(\theta)\, \sin(i+1)\theta}\right)  \cdot\left( \lim_{\theta\to\frac{\pi
}{s+1}}\ \sin(s+1)\theta\right) 
\end{align*}
Note that
\[
\lim_{\theta\to\frac{\pi}{s+1}}\ \sin(s+1)\theta=\sin(\pi)=0.
\]
Recall that $i \leq s$, so $(i+1)\theta\leq(s+1)\theta<\pi$. Hence, we know
\[
\lim_{\theta\to\frac{\pi}{s+1}}\ \dfrac{\sin(s-i+1)\theta}{\sin(\theta)\,
\sin(i+1)\theta}
\]
is defined and finite. Thus
\[
\lim_{\theta\to\frac{\pi}{s+1}}\ \operatorname{ratio}\left( \theta,i\right)
=0.
\]

\end{proof}

\bibliographystyle{abbrv}
\bibliography{../../references/all}

\end{document}